\documentclass[11pt,reqno]{amsart}
\usepackage{amscd,amsmath,amsopn,amssymb,amsthm,multicol}
\usepackage{tikz,subdepth,anysize,verbatim,ifthen,xargs,colortbl,float}
\usepackage{longtable,mathtools}
\usepackage[english]{babel}
\usepackage[utf8]{inputenc}
\everymath=\expandafter{\the\everymath\displaystyle}
\usetikzlibrary{decorations.markings,arrows,arrows.meta,bending,calc}
\tikzset{>={Stealth[scale=1.5, bend]}}
\textwidth 180mm
\textheight 257mm

\setlength{\topmargin}{-0.7cm}
\setlength{\oddsidemargin}{-1.05cm}
\setlength{\evensidemargin}{-1.05cm}
\theoremstyle{plain}
\newtheorem{theorem}{Theorem}

\newtheorem{lemma}{Lemma}
\newtheorem{cor}{Corollary}
\newtheorem{rk}{Remark}
\newtheorem*{definition}{Definition}

\newcommand\com[1]{}

\newcommand\op[1]{\mathop{\rm #1}\nolimits}
\newcommand\p{\partial}

\newcommand\R{{\mathbb R}}

\arrayrulecolor{gray!50}

\begin{document}

\title{Rational First Integrals and Relative Killing Tensors}

\author{Boris Kruglikov}
\address{Department of Mathematics and Statistics, UiT the Arctic University of Norway, Troms\o\ 9037, Norway.
\ E-mail: {\tt boris.kruglikov@uit.no}. }

 \begin{abstract}
We relate rational integrals of the geodesic flow of a (pseudo-)Riemannian metric
to relative Killing tensors, describe the spaces they span and discuss upper bounds on their dimensions. 
 \end{abstract}

\maketitle

First integrals of motion of Hamiltonian systems help to integrate their dynamics. In the case of geodesic flows, 
homogeneous polynomial in momenta integrals, also known as Killing tensors, play an important role in applications;
they were extensively studied for more than a century. In particular, it was shown in \cite{D,Th} that for a metric $g$ 
on a manifold $M$ of dimension $m$ the (linear) space $\mathcal{K}_{m,d}(g)$ of such integrals of $\deg K=d$ 
is always finite dimensional with the sharp bound 
 $$
\dim\mathcal{K}_{m,d}(g)\leq\Lambda_{m,d}:=\frac{(m+d-1)!(m+d)!}{(m-1)!m!d!(d+1)!}
 $$
attained precisely on spaceforms (metrics of constant curvature; any metric signature and curvature sign).

Recently, rational in momenta integrals have attracted attention in Hamiltonian mechanics \cite{Ko}
and some explicit examples of those have been constructed \cite{AS} and investigated \cite{AA}. 
While they were introduced by Darboux \cite{D} and then studied in the context of general relativity \cite{Co,AHT},
much less is known about rational integrals than about their polynomial counterpart, due to nonlinearity of the
corresponding equation. 
We will make a few basic observations on rational integrals in this note.

\section{Rational integrals and relative Killing tensors}\label{S1}

Let $g$ be a Riemannian or pseudo-Riemannian metric on a manifold $M$ and $(g^{ij})=(g_{ij})^{-1}$
be the inverse tensor. The Hamiltonian $H(x,p)=\tfrac12g^{ij}(x)p_ip_j$ together with the canonical symplectic
structure $\omega$ on $T^*M$ define the Hamiltonian vector field $X_H=\omega^{-1}dH$. A function $F=F(x,p)$ is an integral 
if it is in involution with $H$ wrt the corresponding Poisson structure $(\pi^{ab})=(\omega_{ab})^{-1}$: $\{H,F\}=X_H(F)=0$.

Integrals of the form $F(x,p)=K^{i_1\dots i_d}(x)p_{i_1}\cdots p_{i_d}$ (summation by repeated indices assumed)
are Killing $d$-tensors, and they can be identified with smooth
sections of the bundle $S^dTM$, $K^{i_1\dots i_d}=K^{(i_1\dots i_d)}$, that are in the kernel of the operator
$\op{sym}\circ\nabla_g$: using $g$-lowering of indices we can write this as $K_{(i_1\dots i_d;i_{d+1})}=0$.

By a rational integral of bidegree $(r,s)$ we will understand an expression of the form 
 \begin{equation}\label{ratint}
F(x,p)=\frac{P(x,p)}{Q(x,p)}=\frac{P^{i_1\dots i_r}(x)p_{i_1}\cdots p_{i_r}}{Q^{i_1\dots i_s}(x)p_{i_1}\cdots p_{i_s}}, 
 \end{equation}
where $P$ and $Q$ are relatively prime homogeneous polynomials in $p$ of degrees $r$ and $s$, respectively.
Denote the (nonlinear) space of such integrals by $\mathcal{R}_{m,r,s}(g)$. It will be demonstrated that this space is 
a (singular, reducible) algebraic variety, so its dimension is well defined (maximal dimension among the components at general points).
We suggest to investigate the following.

\medskip

{\bf Conjecture.} {\it The space $\mathcal{R}_{m,r,s}(g)$ is finite-dimensional with the dimension bounded by 
that of $\mathcal{R}_{m,r,s}(g_0)$.}

\medskip

\noindent
Here and in what follows $g_0$ is a flat metric, which (by Remark \ref{Rk3}) can be changed to a metric of constant curvature.
Below we will motivate this claim by demonstrating finite-dimensionality of $\mathcal{R}_{m,r,s}(g)$ and showing that
$\dim\mathcal{R}_{m,r,s}(g_0)$ is actually $\Lambda_{m,r}+\Lambda_{m,s}-1$,
based on $\mathcal{R}_{m,r,s}(g_0)=\{P/Q:P\in\mathcal{K}_{m,r}(g_0),Q\in\mathcal{K}_{m,s}(g_0)\}$.

Note that if a polynomial in momenta function is an integral, so are all its homogeneous components.
For more general rational inhomogeneous integrals, where either numerator or denominator of such expression does not vanish at $p=0$,
one of the functions $F$ or $F^{-1}$ is analytic near the zero section in $TM$ and hence gives rise to Killing tensors by 
Whitteker's theorem \cite{Wh}, which was perhaps already known to Darboux \cite{D}. 
The following motivates consideration of rational homogeneous functions as in \eqref{ratint}.

 \begin{lemma}
Let $F=\frac{P}{Q}$ be an integral of $H$, where $P=\sum_{i=r_0}^{r_1}P_i$, $Q=\sum_{i=s_0}^{s_1}Q_i$, with $P_i,Q_i$ being homogeneous
polynomials in $p$ of degree $i$. Then both $F_0=\frac{P_{r_0}}{Q_{s_0}}$ and $F_1=\frac{P_{r_1}}{Q_{s_1}}$ are rational integrals.
 \end{lemma}
 
Warning: the case $r_0=s_0=0$ leads to the trivial integral, that's why it was considered before the lemma.
We assume that the extreme terms $P_{r_0},P_{r_1},Q_{s_0},Q_{s_1}$ in the Lemma are nontrivial. 

 \begin{proof}
We have the polynomial expression 
 $$
Q^2\{H,F\}=\{H,P\}Q-\{H,Q\}P=0.
 $$ 
Taking its homogeneous components of degrees $r_0+s_0+1$ and $r_1+s_1+1$ yields the claim.
 \end{proof}

It was proven by Ten in \cite{Te} (see also \cite{KoZ}) that for any degree $d$ there exists a (local) metric $g$ on $M$ 
with an irreducible Killing $d$-tensor (irreducibility means impossibility to represent it through simpler integrals;
for Killing tensors: of smaller degree). 
Similarly, Kozlov demonstrated in \cite{Ko} that there exist (local) metrics $g$ on $M$ with irreducible rational integrals of 
any bidegree $(r,s)$.

\subsection{Relative Killing tensors}

In order to underdstand the space of rational integrals let us introduce the following concept.

 \begin{definition}
A function $K=K(x,p)$ that is homogeneous polynomial in momenta of $\deg K=d$ is called a relative Killing tensor of degree $d$ for metric $g$ 
on $M$ (or more precisely $L$-relative Killing $d$-tensor) if 
 \begin{equation}\label{RK}
\{H,K\}=L\cdot K
 \end{equation} 
for some function $L$ (called cofactor) necessarily linear in momenta: $\deg L=1$.
 \end{definition}

This condition means that the Hamiltonian vector field $X_H$ is tangent to the submanifold $K=0$ in 
$T^*M\stackrel{g}\simeq TM$, and this partially constrains the dynamics. 
 
 \begin{rk}
An equivalent concept appeared previously under the name generalized Killing tensor in \cite{AHT} and
under the name Darboux polynomial in \cite{MP}. We prefer the above nomenclature, as it makes it analogous
to relative invariants for the group actions. Here the group $\R$ acts through the Hamiltonian flow of $H$. 
 \end{rk}

Note that if $K$ is a relative Killing $d$-tensor, then so is $e^\varphi K$ for any $\varphi\in C^\infty(M)$.
Indeed, in this case $L$ is changed to $L+d\varphi$ (if we understand $L$ as a one-form). In particular, $K$ is (locally)
conformal to a Killing $d$-tensor iff $dL=0$ (this two-form $dL$ is an analog of the Chern curvature in the web theory).
We will not distinguish between conformal solutions, and in what follows we will identify the pairs 
 \begin{equation}\label{gauge}
(K,L)\sim(e^\varphi K,L+d\varphi).
 \end{equation}
This will be called gauge equivalence.

 \begin{rk}
It is instructive to compare the above definition with the definition of conformal Killing tensor $K$, which is $\{H,K\}=L\cdot H$.
In this case scaling of $H$ (or equivalently $g$) keeps this condition invariant (modulo modification of $L$, which now becomes
a symmetric $(d-1)$-form).
 \end{rk}

Let us denote the space of $L$-relative Killing $d$-tensors of $g$ by $\mathcal{K}^L_{m,d}(g)$. 
Note that this is a linear space due to linearity of the defining PDE system, and we have 
$\mathcal{K}^{L-d\varphi}_{m,d}(g)=e^{\varphi}\mathcal{K}^L_{m,d}(g)$. 

 \begin{theorem}\label{first}
For any $L$ we have $\dim\mathcal{K}^L_{m,d}(g)\leq\Lambda_{k,d}$.
 \end{theorem}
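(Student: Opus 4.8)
The plan is to exploit that the relative Killing equation \eqref{RK} differs from the ordinary Killing equation only by a zeroth-order term, so that it has the \emph{same principal symbol} as the Killing operator, and hence the same finite-type prolongation — in particular the same dimension bound. Concretely, since $\{H,\cdot\}$ acts on degree-$d$ polynomials in momenta as $\op{sym}\circ\nabla$, condition \eqref{RK} reads in covariant form
$$K_{(i_1\dots i_d;i_{d+1})}=L_{(i_{d+1}}K_{i_1\dots i_d)},$$
a first-order linear PDE on $K\in\Gamma(S^dTM)$ whose leading part is exactly the Killing operator $\op{sym}\circ\nabla$; the cofactor enters only algebraically on the right. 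I note first that when $dL=0$ the bound is immediate: locally $L=d\varphi$, and the gauge identification \eqref{gauge} gives $\mathcal{K}^L_{m,d}(g)=e^{-\varphi}\mathcal{K}^0_{m,d}(g)=e^{-\varphi}\mathcal{K}_{m,d}(g)$, so $\dim\mathcal{K}^L_{m,d}(g)=\dim\mathcal{K}_{m,d}(g)\le\Lambda_{m,d}$. The content of the theorem is thus the general case $dL\ne0$, which the symbol argument treats uniformly.

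Next I would invoke the finite-type theory behind the bound of \cite{D,Th}. The Killing operator $\op{sym}\circ\nabla\colon\Gamma(S^dTM)\to\Gamma(S^{d+1}TM)$ is of finite type: after finitely many prolongations the system closes to a linear connection $\nabla^{\mathcal{E}}$ on a vector bundle $\mathcal{E}\to M$ of rank $\Lambda_{m,d}$, and Killing $d$-tensors correspond bijectively to $\nabla^{\mathcal{E}}$-parallel sections of $\mathcal{E}$. Because the prolongation order, the bundle $\mathcal{E}$, and its rank depend only on the symbol of the operator, carrying out the same prolongation for the relative Killing operator yields the \emph{same} bundle $\mathcal{E}$, equipped with a modified connection $\nabla^{\mathcal{E},L}$ into whose lower-order coefficients $L$ and its covariant derivatives enter. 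A solution $K\in\mathcal{K}^L_{m,d}(g)$ then corresponds to a $\nabla^{\mathcal{E},L}$-parallel section; since such a section is determined by its value at one point, the evaluation $K\mapsto(\text{prolonged jet of }K)(x_0)$ is injective into $\mathcal{E}_{x_0}$, giving $\dim\mathcal{K}^L_{m,d}(g)\le\op{rank}\mathcal{E}=\Lambda_{m,d}$.

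The main obstacle is precisely the assertion that prolonging the relative operator reproduces the rank of the Killing prolongation unchanged. One must check at each step that the additional cofactor contributions are genuinely of lower order and impose no integrability conditions beyond those already dictated by the symbol — equivalently, that appending a zeroth-order term to a finite-type operator preserves finite type with the same successive prolongation dimensions. This is a standard feature of the formal theory of overdetermined PDE, traceable to the fact that the relevant obstruction spaces (the Spencer cohomology of the symbol) are insensitive to the algebraic perturbation $\mathcal{D}_0K\mapsto\mathcal{D}_0K-L\cdot K$; nonetheless it is the point that genuinely requires verification, and I would record it as the key lemma underlying the proof.
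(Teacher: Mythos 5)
Your proposal is correct and takes essentially the same route as the paper: both rest on the observation that the cofactor term $L_{(i}K_{\sigma)}$ is zeroth order in $K$, so the relative Killing equation has the same symbol---hence the same finite-type prolongation structure---as the Killing equation, and both bound $\dim\mathcal{K}^L_{m,d}(g)$ by the fiber dimension $\Lambda_{m,d}$ of the prolonged system at a point. The paper packages the last step as a complete (Frobenius) system whose compatibility conditions can only decrease the solution space (with the flat case realizing $\Lambda_{m,d}$), while your prolongation-connection formulation, with the deformed connection whose parallel sections are the relative Killing tensors, is exactly what the paper records in the remark following its proof (the connection $D_g^L$).
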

 
 \begin{proof}
The defining equation \eqref{RK} for $L$-relative Killing $d$-tensor $K$ has the same symbol as that for $d$-Killing tensors
(we denote $\nabla_iK_\sigma=K_{\sigma;i}$, $\nabla_jK_{\sigma;i}=K_{\sigma;ij}$, etc)
 \begin{equation}\label{Ksi}
K_{(\sigma;i)}=L_{(i}K_{\sigma)}
 \end{equation}
in which $\sigma=\langle i_1\dots i_d\rangle$ is 
a multiindex of length $|\sigma|=d$ and $(\dots)$ denotes the symmetrization. Using jet-language, it defines the equation-submanifold $\mathcal{E}_d\subset J^1(S^dTM)$ with
empty complex characteristic variety, whence system \eqref{Ksi} has finite type (we refer to \cite{KL1}
for basics of the geometry of differential equations).
 
Prolongations $\mathcal{E}_d^{(k)}$ of this equation (obtained by left composition with total derivatives) is
 \begin{equation}\label{eqE}
K_{(\sigma;i)\varkappa}=\sum_{\nu\subset\varkappa}\binom{\varkappa}{\nu}L_{(i;\nu}K_{\sigma);\varkappa-\nu}\qquad
(|\varkappa|=k),
 \end{equation}
where $\varkappa$ is a multi-index and we assume symmetrization of the rhs applies only to $(i,\sigma)$.
In particular, for $k=d$ this system of equations is determined wrt $K_{\sigma;i\varkappa}$ and can be rewritten 
(see Lemma \ref{L3} below)
 $$
K_{\sigma;\tau}=F_{\sigma\tau}(\{L_{i;\nu},K_{\mu;\nu}:1\leq i\leq m,|\mu|=d,|\nu|\leq d\}) \qquad 
(|\sigma|=d,\ |\tau|=d+1).
 $$
This is a complete (Frobenius, finite type) system. Its space of solutions is parametrized by free jets
subject to the compatibility conditions. The latter may only decrease the fiber of this equation over $M$,
which is thus maximal for the Cauchy data read off the symbol. This corresponds to the flat space
$g=g_0$ (where $F_{\sigma\tau}=0$), in which case the dimension is $\dim\mathcal{K}^L_{m,d}(g_0)=\Lambda_{m,d}$.
 \end{proof}

 \begin{rk}\label{Rk3}
The last part also applies to any metric $g_0$ of constant sectional curvature (all such metrics are projectively equivalent). 
This is based on projective invariance of equation \eqref{Ksi}.

In the case $L=0$, this is known as the projective invariance of the Killing equation, see e.g.\ \cite{E,EM}.
Let us recall the argument. Consider the line bundle $K_M^w=(\Lambda^mT^*M)^{\frac{-w}{m+1}}$ of weight $w\in\R$
associated to the canonical bundle, and for any vector bundle $V$ denote $V[w]=V\otimes K_M^w$
the corresponding weighted bundle and the same for its sections $\Gamma(V)[w]$. 
For $\sigma\in\Gamma(K_M^w)$, under projective change of connection
$\hat\nabla_XY=\nabla_XY+\Upsilon(X)Y+\Upsilon(Y)X$ for some 1-form $\Upsilon$, we have 
$\hat\nabla\sigma=\nabla\sigma+w\Upsilon\otimes\sigma$, whence 
 $$
\hat\nabla_{i_0}\omega_{i_1\dots i_k}=\nabla_{i_0}\omega_{i_1\dots i_k}+(w-k)\Upsilon_{i_0}\omega_{i_1\dots i_k}
-\sum_{t=1}^k\Upsilon_{i_t}\omega_{i_1\dots i_{t-1}i_0i_{t+1}\dots i_k},\quad \omega\in\Gamma(\otimes^kT^*M)[w].
 $$
For $w=2k$ this yields $\hat\nabla_{(i_0}\omega_{i_1\dots i_k)}=\nabla_{(i_0}\omega_{i_1\dots i_k)}$, i.e.\
the (weighted) Killing operator $\nabla:\Gamma(S^kT^*M)[2k]\to\Gamma(S^{k+1}T^*M)[2k]$,
$K_\sigma\mapsto\nabla_{(i}K_{\sigma)}$, is projectively invariant.

The argument straightforwardly generalizes to the relative case. Indeed, if we treat $L$ in  \eqref{Ksi} un-weighted and let $\hat{L}=L-\Upsilon$,
then we have $\hat\nabla_{(i}K_{\sigma)}-\hat{L}_{(i}K_{\sigma)}=\nabla_{(i}K_{\sigma)}-L_{(i}K_{\sigma)}$.
 \end{rk}

The projective invariance implies that in the flat case ($g=g_0$, $L=0$) the space of Cauchy data
(equivalently: solution space $\mathcal{K}_{m,d}(g_0)$ is the representation $\Gamma_{d\cdot\pi_2}$ of the group $SL(m+1)$,
and exploiting the Weyl dimension formula we get $\dim\mathcal{K}_{m,d}(g_0)=\dim\Gamma_{d\cdot\pi_2}=\Lambda_{m,d}$.
(Here and below $\pi_k$ denotes the $k$-th fundamental weight of the Lie group $SL(m+1)$ and $\Gamma_{\nu}$ the representation of weight $\nu$.)
 
  
 \begin{rk}
It is known that the space $\mathcal{K}_{m,d}(g)$ is isomorphic to the space of tensors parallel wrt prolongation connection $D_g$ 
on the bundle $\mathcal{E}_d^{(d)}\subset J^{d+1}(S^dTM)$ over $M$. The rank of this bundle is precisely $\Lambda_{m,d}$.
Formulae \eqref{eqE} for $k\leq d$ determine a deformed connection $D_g^L$ such that its space of parallel sections is isomorphic
to $\mathcal{K}^L_{m,d}(g)$. Clearly the solution space is maximal iff this connection is flat.
We conjecture that this flatness implies the vanishing 
$dL=0$; this was verified for $d=2$.
 \end{rk}
 
The ring of relative Killing tensors is doubly graded by $d$ and $L$:
$\mathcal{K}^{L_1}_{m,d_1}(g)\cdot\mathcal{K}^{L_2}_{m,d_2}(g)\subset\mathcal{K}^{L_1+L_2}_{m,d_1+d_2}(g)$.
A similar relation holds for the corresponding field of rational fractions.

\subsection{Rational integrals with analytic coefficients}

Given $P\in\mathcal{K}^L_{m,r}(g)$ and $Q\in\mathcal{K}^L_{m,s}(g)$ with the same $L$, their ratio 
$F=\frac{P}{Q}$ belongs to $\mathcal{R}_{m,r,s}(g)$. 
We assume $P$ and $Q$ have no common (polynomial in $p$) factors. 

To discuss factors we assume till the end of this section that the metric $g$ and the coefficients 
of the rational functions $K$ are analytic in $x$ variables (in the rest of the paper those are smooth in $x$). 
We begin with the complex case ($M,g,K$ holomorphic) and comment on the real case at the end.

 \begin{lemma}
If $K\in\mathcal{K}^L_{m,d}(g)$, then $K|_\Sigma\not\equiv0$ (evaluation along $\Sigma$)
for any analytic variety $\Sigma\subset M$
of codimenson 1. Similarly, if $F\in\mathcal{R}_{m,r,s}(g)$, then $R|_\Sigma\not\equiv0,\infty$ for any analytic variety 
$\Sigma\subset M$ of codimenson~1. 
 \end{lemma}
 
 \begin{proof}
Consider the opposite case $K|_\Sigma\equiv0$. We may assume without loss of generality that $\Sigma$ is irreducible.
Morover the statement is localizable, so we may shrink $M$ if necessary and assume $\Sigma$ to be 
defined by one function. Thus let $\Sigma=\{a=0\}$ for $a\in\mathcal{O}(M)$, such that $da|_\Sigma\not\equiv0$. 

We can write $K=a^kK'$ for some $k\in\mathbb{N}$ and polynomial $K'$ with analytic coefficients such that 
$K'|_\Sigma\not\equiv0$. Then 
 $$
a^kLK'=LK=\{H,K\}=\{H,a^k\}K'+a^k\{H,K'\}=a^{k-1}\bigl(k\{H,a\}K'+\{H,K'\}a\bigr),
 $$
whence $\{H,a\}|_\Sigma=0$. Since $\{H,a\}$ is $da$ with $g$-raised indices, this is a contradiction.

The proof for a rational integral $F$ is similar.
 \end{proof}

To proceed let us recall that the sheaf of analytic functions $\mathcal{O}_M$ is Noetherian and UFD \cite{GR}, 
hence by the Hilbert theorem the same holds for the ring of polynomials with local coefficients $\mathcal{O}_M[p_1,\dots,p_m]$. 

With global coefficients the ring $A=\mathcal{O}(M)$ may fail the UFD property, yet it is factorial 
(i) in the setup of the global Weierstrass lemma \cite{GR}, (ii) for Cousin-II sets \cite{Coe}.
In any of the above cases a meromorphic function $f=\frac{a}{b}$, $a\in A$, $b\in A_\times$ can be represented by
a ratio $\frac{a}{b}$ with no common components of codimension one for $Z(a)$ and $Z(b)$. 
This also holds (iii) if $M$ is reduced to any smaller $M'\Subset M$
(then the number of components of the algebraic set $\{a=0\}$ for $a\in A$ is finite in $M'$).

Thus even though the ring $A[p_1,\dots,p_m]$ may fail to be Noetherian in general, it has no zero divisors and 
is factorial in any of the above cases, which we assume without further saying.
Note also that units in this ring are nowhere vanishing analytic functions, 
so $\op{gcd}(F,G)=1$ for $F,G\in A[p_1,\dots,p_m]$ means the polynomials $F,G$ have no common 
factors of positive degree in $p$ and no common factors from $A$ except for units.
 
 \begin{lemma}
Let $F,G,H\in\mathcal{O}(M)[p_1,\dots,p_m]$ and $\op{gcd}(F,G)=1$. 
Assume $F$ and $G$ do not vanish on a common analytic set of codimension one in $M$. If $F|(G\cdot H)$ then $F|H$.
 \end{lemma}

 \begin{proof}
Since the ring $A=\mathcal{O}(M)$ has no zero divisors, we may pass to the field of fractions $R=A/A_\times$, consisting of meromorphic functions on $M$ with global presentation as ratio $f=\frac{a}{b}$.
The corresponding ring $R[p_1,\dots,p_m]$ is Noetherian and UFD, and $\op{gcd}(F,G)=1$ still holds in it.
Therefore $H=FK$ for some $K\in R[p_1,\dots,p_m]$. Decompose $K=a^{-1}L$ for $L\in A[p_1,\dots,p_n]$, $a\in A_\times$.

The analytic set $\{a=0\}$ consists of at most countable number of components in $M$ (possibly with finite multiplicity). 
Each is contained in the common set of zeros for all coefficients of either $F$ or $L$. However if $F$ vanishes on a component, then
$H$ must do the same, so no cancellation comes from this part. Thus $L$ must vanish on all components, so it must be divisible by $a$
and we get the required factorization.
 \end{proof}

 \begin{lemma}
Let $K\in\mathcal{K}^L_{m,d}(g)$. Then every polynomial factor of $K$ belongs to some space $\mathcal{K}^{L'}_{m,d'}(g)$.
 \end{lemma}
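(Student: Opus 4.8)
The plan is to read condition \eqref{RK} as the defining relation of a \emph{Darboux polynomial} for the derivation $D=\{H,\cdot\}=X_H$ of the Poisson algebra, and to run the standard argument that an eigen-divisor of a derivation is itself an eigen-divisor. First I would reduce to irreducible factors: once every irreducible factor of $K$ is shown to lie in some $\mathcal{K}^{L'}_{m,d'}(g)$, an arbitrary factor is a product of these, and by the multiplicativity $\mathcal{K}^{L_1}_{m,d_1}(g)\cdot\mathcal{K}^{L_2}_{m,d_2}(g)\subset\mathcal{K}^{L_1+L_2}_{m,d_1+d_2}(g)$ noted above it lies in the space with the summed degree and summed cofactor. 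So it suffices to handle a single irreducible factor.

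Working locally, the ring $C^\omega(M)[p_1,\dots,p_m]$ is a UFD (the ring of germs of analytic functions is a UFD by Weierstrass preparation, and Gauss's lemma promotes this to the polynomial ring), and it is graded by the $p$-degree, so a homogeneous element factors into homogeneous irreducibles. I would therefore write $K=K_1^{a}M$ with $K_1$ irreducible and homogeneous of degree $d_1$, $a\geq1$, and $K_1\nmid M$. Applying the Leibniz rule to $DK=LK$ and cancelling $K_1^{a-1}$ yields
 $$
a\,(DK_1)\,M=K_1\,(LM-DM).
 $$
Since $K_1$ is prime and divides the right-hand side while $K_1\nmid M$ and $K_1\nmid a$, we get $K_1\mid DK_1$, i.e.\ $DK_1=L_1K_1$ for some $L_1\in C^\omega(M)[p]$. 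A degree count closes the argument: $D=X_H$ raises the $p$-degree by exactly one, so $DK_1$ is homogeneous of degree $d_1+1$ and the quotient $L_1=DK_1/K_1$ is homogeneous of degree $1$. Hence $K_1\in\mathcal{K}^{L_1}_{m,d_1}(g)$, as required.

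The main obstacle is not the (routine) Darboux algebra but verifying that the factorization genuinely takes place inside $C^\omega(M)[p]$: that the irreducible factors again have analytic coefficients and are homogeneous in $p$. This is precisely what the local UFD structure together with the grading secures; a global statement may require patching the local factorizations, but since the cofactors are only defined up to the conformal gauge \eqref{gauge}, the ambiguity is harmless. Carrying the same computation through the full factorization $K=\prod_iK_i^{a_i}$ additionally gives $L=\sum_i a_iL_i$, which makes the doubly graded structure of the ring of relative Killing tensors transparent.
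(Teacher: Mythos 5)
Your proof is correct and is essentially the paper's argument: both rest on the Leibniz rule for the derivation $\{H,\cdot\}$ together with divisibility in the ring of momentum-polynomials with analytic coefficients (the Darboux-polynomial mechanism). The only differences are organizational --- the paper treats a coprime splitting $K=PQ$ and a pure power $K=P^n$ as two cases, whereas you handle each irreducible factor with its multiplicity in one step, and you make explicit the ring-theoretic input (local UFD via Weierstrass preparation and Gauss's lemma), the homogeneity of the factors, and the degree count showing the cofactor $L_1$ is linear in momenta, all of which the paper leaves implicit.
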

 
 \begin{proof}
If $K=PQ$ with $\op{gcd}(P,Q)=1$ over $\mathcal{O}(M)[p_1,\dots,p_n]$ then
$\{H,K\}=\{H,P\}Q+\{H,Q\}P=LPQ$, so that $P$ divides $\{H,P\}$ and $Q$ divides $\{H,Q\}$.
Similarly, if $K=P^n$, then $P$ divides $\{H,P\}$.
 \end{proof}

 \begin{lemma}
Let $F=\frac{P}{Q}\in\mathcal{R}_{m,r,s}(g)$ with $\op{gcd}(P,Q)=1$. 
Then we get $P\in\mathcal{K}^L_{m,r}(g)$, $Q\in\mathcal{K}^L_{m,s}(g)$ for some $L\in\Gamma(TM)$.
 \end{lemma}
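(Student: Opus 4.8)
The plan is to turn the rational-integral condition $\{H,F\}=0$ into a single polynomial identity and then exploit the absence of zero divisors in $C^\omega(M)[p_1,\dots,p_m]$. Writing $F=P/Q$ and applying the quotient rule for the Poisson bracket, the condition $\{H,F\}=0$ reads
\[
\frac{\{H,P\}\,Q-P\,\{H,Q\}}{Q^2}=0,
\]
which, after clearing the nonvanishing denominator $Q^2$, is equivalent to the homogeneous polynomial identity $\{H,P\}\,Q=P\,\{H,Q\}$.

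Next I would track momentum degrees. Since $H$ is quadratic in $p$, the operator $\{H,\cdot\}$ raises the degree in momenta by one, so $\{H,P\}$ is homogeneous of degree $r+1$ and $\{H,Q\}$ of degree $s+1$; accordingly both sides of the identity have degree $r+s+1$, as they must. Because $\op{gcd}(P,Q)=1$ in the domain $C^\omega(M)[p_1,\dots,p_m]$, the identity $P\,\{H,Q\}=\{H,P\}\,Q$ forces $P\mid\{H,P\}$ and $Q\mid\{H,Q\}$, exactly as in the preceding factor lemma. Comparing degrees, the quotients $L:=\{H,P\}/P$ and $L':=\{H,Q\}/Q$ are homogeneous of degree precisely $1$ in $p$, i.e.\ cofactors $L=L^i(x)p_i$ and $L'=L'^{\,i}(x)p_i$ linear in momenta. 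Thus $\{H,P\}=L\cdot P$ and $\{H,Q\}=L'\cdot Q$, so $P$ and $Q$ are relative Killing tensors in the sense of \eqref{RK}.

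It remains to identify the two cofactors. Substituting $\{H,P\}=L\cdot P$ and $\{H,Q\}=L'\cdot Q$ back into $\{H,P\}\,Q=P\,\{H,Q\}$ gives $(L-L')\,PQ=0$; since $PQ\neq0$ and the ring has no zero divisors, we conclude $L=L'$. Identifying the linear-in-momenta form $L=L^i p_i$ with the vector field $L^i\partial_{x^i}\in\Gamma(TM)$, we obtain $P\in\mathcal{K}^L_{m,r}(g)$ and $Q\in\mathcal{K}^L_{m,s}(g)$ with one and the same $L$, as claimed.

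The main subtlety, and the only place where anything more than bookkeeping is needed, is the divisibility step: it is meaningful only because the analyticity hypothesis makes $C^\omega(M)[p_1,\dots,p_m]$ an integral domain in which coprimality and unique factorization are available. This is precisely where the standing analyticity assumption of this section enters, and it is also what ensures that the cofactor $L$ is globally well defined rather than merely a local artifact of the factorization.
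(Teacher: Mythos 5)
Your proposal is correct and takes essentially the same route as the paper's proof: clear the denominator to get the polynomial identity $\{H,P\}\,Q=P\,\{H,Q\}$, use coprimality in the ring of momentum-polynomials with analytic coefficients to conclude $P\mid\{H,P\}$ and $Q\mid\{H,Q\}$, and then note that the two cofactors must coincide. You only add the degree bookkeeping and the explicit cancellation $(L-L')PQ=0$ that the paper leaves implicit in its one-line remark ``the corresponding factors $L$ must be equal.''
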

 
 \begin{proof}
Indeed, $\{H,F\}Q^2=\{H,P\}Q-\{H,Q\}P=0$, so $P$ divides $\{H,P\}$ and $Q$ divides $\{H,Q\}$.
The corresponding factors $L$ must be equal.
 \end{proof}
 
 For $L\in\Gamma(TM)$, let us denote by $\mathcal{S}^L_{r,s}$ the space of pairs $(P,Q)\in\mathcal{K}^L_{m,r}(g)\times\mathcal{K}^L_{m,s}(g)$
such that $P,Q\neq0$, $\op{gcd}(P,Q)=1$, and let $\mathcal{S}_{r,s}=\cup_L\mathcal{S}^L_{r,s}$.
(Note that we do not require $P$ and $Q$ to be irreducible, just relatively prime in $p$-variables.)
For $r=s$ this entails (but is not bounded to) $\dim\mathcal{K}^L_{m,r}(g)>1$.

Let us call $L$ admissible if the above set $\mathcal{S}^L_{r,s}$ is nontrivial.
It will be demonstrated below that the space of admissible cofactors $L$ modulo the gauge equivalence
is an algebraic variety (possibly reducible) so that the union above makes a perfect sense; 
in particular, it gives a stratification to the space of rational integrals of fixed bi-degree.

 \begin{cor}\label{cr1}
The following describes analytic rational integrals: $\mathcal{R}_{m,r,s}(g)=\bigl\{\tfrac{P}{Q}:(P,Q)\in\mathcal{S}_{r,s}\bigr\}$.
 \end{cor}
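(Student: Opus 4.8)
The plan is to show the two inclusions between the sets $\mathcal{R}_{m,r,s}(g)$ and $\bigl\{\tfrac{P}{Q}:(P,Q)\in\mathcal{S}_{r,s}\bigr\}$, since most of the work has already been assembled in the two preceding lemmas. First I would prove the inclusion $\mathcal{R}_{m,r,s}(g)\subseteq\bigl\{\tfrac{P}{Q}:(P,Q)\in\mathcal{S}_{r,s}\bigr\}$. Given $F=\tfrac{P}{Q}\in\mathcal{R}_{m,r,s}(g)$, by the very definition \eqref{ratint} we may assume $P$ and $Q$ are relatively prime homogeneous polynomials in $p$ of degrees $r$ and $s$ with $P,Q\neq0$. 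The second lemma then yields a common cofactor $L\in\Gamma(TM)$ with $P\in\mathcal{K}^L_{m,r}(g)$ and $Q\in\mathcal{K}^L_{m,s}(g)$. Thus $(P,Q)\in\mathcal{S}^L_{r,s}\subseteq\mathcal{S}_{r,s}$, giving the inclusion.

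For the reverse inclusion $\bigl\{\tfrac{P}{Q}:(P,Q)\in\mathcal{S}_{r,s}\bigr\}\subseteq\mathcal{R}_{m,r,s}(g)$, I would take $(P,Q)\in\mathcal{S}_{r,s}$, so that $(P,Q)\in\mathcal{S}^L_{r,s}$ for some $L$, meaning $P\in\mathcal{K}^L_{m,r}(g)$, $Q\in\mathcal{K}^L_{m,s}(g)$, with $P,Q\neq0$ and $\op{gcd}(P,Q)=1$. The key computation is that $F=\tfrac{P}{Q}$ is an integral: since $\{H,P\}=L\cdot P$ and $\{H,Q\}=L\cdot Q$ with the \emph{same} $L$, the quotient rule for the Poisson bracket gives
 $$
\{H,F\}=\frac{\{H,P\}Q-P\{H,Q\}}{Q^2}=\frac{(L\cdot P)Q-P(L\cdot Q)}{Q^2}=0.
 $$
Together with the relative primality ensuring that $F$ genuinely has bidegree $(r,s)$, this shows $F\in\mathcal{R}_{m,r,s}(g)$.

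I do not expect any serious obstacle here, as the statement is essentially a repackaging of the two lemmas; the only point requiring mild care is the bookkeeping of what $\mathcal{S}_{r,s}=\cup_L\mathcal{S}^L_{r,s}$ encodes. One should verify that the relative-primality condition built into $\mathcal{S}_{r,s}$ matches the definition of bidegree in \eqref{ratint} (where $P,Q$ are taken relatively prime), so that no collapse of degrees occurs and the correspondence $(P,Q)\mapsto P/Q$ is well-defined on the stated set. Under the analyticity assumption on $g$ and on the coefficients, the ring $C^\omega(M)[p_1,\dots,p_n]$ has no zero divisors and is Noetherian, which legitimizes the use of $\op{gcd}$ and guarantees the divisibility arguments underlying both lemmas. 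With these remarks the corollary follows immediately from the two inclusions above.
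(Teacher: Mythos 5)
Your proposal is correct and follows essentially the same route as the paper: the paper treats this corollary as an immediate consequence of the preceding observation that a ratio of two $L$-relative Killing tensors with the same cofactor $L$ is a rational integral (the quotient-rule computation you write out) together with the second lemma for the converse inclusion. Your spelled-out two-inclusion argument, including the care about relative primality and the analyticity hypothesis justifying the $\op{gcd}$ arguments, is exactly the intended content.
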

 
In particular, for fractional-linear integrals we have 
$\mathbb{P}\mathcal{R}_{m,1,1}(g)\simeq\cup_L
(\mathbb{P}\mathcal{K}^L_{m,1}\times\mathbb{P}\mathcal{K}^L_{m,1}\!\setminus\!\op{diag})$,
where the union is taken over such $L$ that $\mathcal{K}^L_{m,1}$ has $\dim>1$
(and the lhs means projectivization of the cone). 

We expect the above claim to hold also in the real analytic case, i.e.\ for relative Killing tensors  
$K\in C^\omega(M)[p_1,\dots,p_n]$. The subtelty is that the components of $\{a=0\}$ for $a\in C^\omega(M)[p_1,\dots,p_n]$ 
may be of higher codimension, so the arguments do not generalize straightforwardly. 
Yet by shrinking $M$ and using the complexification the results can be carried over (the simplest case
is when $M$ is a germ of a real analytic manifold).
The case $K\in C^\infty(M)[p_1,\dots,p_n]$ is even more complicated as this ring has zero divisors.

\section{Finite dimensionality of the space of rational integrals}\label{S2}

The following result is a direct extension of Theorem 1 of \cite{KM}; in fact it was briefly mentioned
in the concluding Section 4 there. 
We will give a short proof for completeness.

 \begin{theorem}\label{Tfri}
For any metric $g$ the space $\mathcal{R}_{m,r,s}(g)$ is finite-dimensional. 
 \end{theorem}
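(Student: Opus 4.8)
The plan is to reduce the statement to the finite-type theory already set up for relative Killing tensors. First I would invoke the two preceding Lemmas together with Corollary~\ref{cr1} to replace $\mathcal{R}_{m,r,s}(g)$ by the set $\mathcal{S}_{r,s}=\bigcup_L\mathcal{S}^L_{r,s}$ of coprime pairs $(P,Q)$ of $L$-relative Killing tensors sharing a single cofactor. Since $F=P/Q$ recovers the pair $(P,Q)$ up to a common scalar factor, it suffices to bound $\dim\mathcal{S}_{r,s}$. Theorem~\ref{first} already controls each individual slice, $\dim\bigl(\mathcal{K}^L_{m,r}(g)\times\mathcal{K}^L_{m,s}(g)\bigr)\leq\Lambda_{m,r}+\Lambda_{m,s}$, so the entire difficulty is concentrated in the dependence on the cofactor $L$, which a priori ranges over an infinite-dimensional space of $1$-forms.

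The device for taming $L$ is that it is \emph{determined} by the numerator. Fixing $(P,Q)\in\mathcal{S}^L_{r,s}$ and working on the ($x$-open, nonempty by coprimality) locus where the polynomial $P$ does not vanish identically, the relative Killing equation $P_{(\sigma;i)}=L_{(i}P_{\sigma)}$ from \eqref{RK} and \eqref{Ksi} has at most one solution $L$: multiplication $L\mapsto L_{(i}P_{\sigma)}$ is injective because $C^\omega(M)[p_1,\dots,p_n]$ has no zero divisors, so $L_{(i}P_{\sigma)}=0$ forces $L=0$. Hence $L$ is a rational function of the $1$-jet of $P$, and, differentiating this relation, each $\nabla^kL$ is a rational function of the $(k{+}1)$-jet of $P$.

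Substituting these expressions into the prolongation \eqref{eqE} at level $k=r$ is what closes the system for $P$ alone, and this is where I expect the only genuinely new obstacle to lie. The top cofactor term $L_{(i;\varkappa}P_{\sigma)}$ with $|\varkappa|=r$ reintroduces, after substitution, the unknown top jet $\nabla^{r+1}P$ on the right-hand side; but it does so linearly and contracted against $P$, so on the complement of a thin analytic set the resulting endomorphism $\mathrm{Id}-A$ is invertible and one solves for $\nabla^{r+1}P$ in terms of strictly lower jets. The outcome is a finite-type (Frobenius) system whose solution space is finite-dimensional, bounded by the rank of the associated jet bundle; equivalently, a single prolongation annihilates the $P$-symbol and reduces the $L$-symbol to the finite-type Killing symbol, so the joint symbol is finite-dimensional and eventually vanishes. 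Carrying out the identical argument for $Q$ and imposing the common value of $L$ then realizes $\mathcal{S}_{r,s}$ as a finite-dimensional (indeed algebraic, as anticipated before Corollary~\ref{cr1}) variety, and therefore $\mathcal{R}_{m,r,s}(g)$ is finite-dimensional.
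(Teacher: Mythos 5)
Your route is genuinely different from the paper's: the paper proves Theorem \ref{Tfri} by the geodesic argument of \cite{KM} --- a rational integral of bidegree $(r,s)$ restricted to $T_aM$ depends on finitely many parameters, and these are recovered at a generic point $a$ from the values of the integral at finitely many points in general position, transported along connecting geodesics --- an argument that uses no relative Killing tensors and works for any smooth metric and integrals of low regularity. Your reduction to relative Killing tensors is instead the strategy of the paper's Theorem \ref{thm2}, which the author notes is equivalent to Theorem \ref{Tfri} via Corollary \ref{cr1}. That route is viable, but your execution of it has a genuine gap.

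The gap is that you ignore the conformal gauge freedom \eqref{gauge}. If $(P,Q)\in\mathcal{S}^L_{r,s}$, then $(e^\varphi P,e^\varphi Q)\in\mathcal{S}^{L+d\varphi}_{r,s}$ for every function $\varphi$, and both pairs give the same integral $F$. Hence $\mathcal{S}_{r,s}$, when nonempty, is infinite-dimensional, and your reduction ``it suffices to bound $\dim\mathcal{S}_{r,s}$'' cannot work: the ``common scalar factor'' relating $(P,Q)$ to $F$ is an arbitrary function of $x$, not a constant, so what must be shown finite-dimensional is the quotient of $\mathcal{S}_{r,s}$ by this action. The same freedom invalidates your key analytic step: the closed system you derive for $P$ alone (after substituting $L=L(j^1P)$) is invariant under $P\mapsto e^\varphi P$, so its solution space contains an infinite-dimensional orbit and the system cannot be of finite type; consequently the endomorphism $\mathrm{Id}-A$ that you propose to invert in order to solve for $\nabla^{r+1}P$ must be degenerate identically, not merely on a thin analytic set. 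This is precisely the point the paper's proof of Theorem \ref{thm2} confronts head-on: there the symbol computation shows that \emph{every} covector is characteristic, exactly because of the gauge, and finite type is obtained only after passing to the gauge-invariant quantity $dL$ via system \eqref{eqL}. To repair your argument you would need either to normalize $P$ (fix the rescaling) or, as the paper does, to close the system on $dL$ rather than on $P$. A secondary defect: your uniqueness-of-$L$ argument and Corollary \ref{cr1} rely on the zero-divisor-free ring $C^\omega(M)[p_1,\dots,p_n]$, i.e.\ on analyticity, whereas Theorem \ref{Tfri} is asserted for any (smooth) metric; the paper's geodesic proof covers that generality, yours does not.
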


 \begin{proof}
Any integral is determined by its values in $\pi^{-1}(U)\subset TM$ for a small neighborhood $U\subset M$ via the geodesic flow. 
A rational integral $F\in\mathcal{R}_{m,r,s}(g)$ (with fixed $m,r,s$) at a point $a\in M$, i.e.\ on the tangent space $T_aM$,
is determined by a finite number $n$ of parameters, in fact 
 $$
n=\binom{r+m-1}r+\binom{s+m-1}s-1.
 $$
Since $F$ is homogeneous, it suffices to determine its values on the unit tangent bundle $T^1M$.

Let us choose $n$ points $a_1,\dots,a_n$ in general position in $U$ (we refer to \cite{KM} for details on this).
Choose a point $a\in M$. There are geodesics $\gamma_1,\dots,\gamma_n$ connecting $a$ to the selected points.
We assume unit length parametrization of all those geodesics (in the case of pseudo-Riemannian metric
this means the square length is $\pm1$ so we exclude null geodesics, and that holds for generic points $a$).

Since integrals are constant along geodesics, the values of $F$ on $\gamma_i$ at $a_i$ give the values of $F$ on $\gamma_i$ 
at $a$. This gives $n$ values of the rational integral $F$ on $T_aM$. For generic $a$ the position of vectors 
$\gamma_i(a)\in T^1_aM$ are generic, hence the integral $F$ is uniquely restored. Thus we obtain $F$ on a dense set 
in $T^1M$, whence by continuity $F$ is determined everywhere.
 \end{proof}
 
 \begin{rk}
The proof tells that $\mathcal{R}_{m,r,s}(g)$ is parametrized by a finite number of constants at different points,
thus it embeds this space of rational intergrals into a finite-dimensional Euclidean space as a smooth submanifold.
Collapsing those points to one enforces using more parameters at that point, namely higher jets of the coefficients of the integral
$F=P/Q$, as is discussed in the next section. This shows that $\mathcal{R}_{m,r,s}(g)$ is, actually, an algebraic affine variety.
 \end{rk}
 
This theorem gives the upper bound $n^2$ on the dimension of the space $\mathcal{R}_{m,r,s}(g)$ of rational integrals
of bi-degrees ($r,s)$, even those of low regularity. 
However this upper bound exceeds the one from the conjecture in the introduction. 
Even in the simplest case of fractional-linear integrals on surfaces $m=2$, $r=s=1$, we get $n=3$
and the bound $n^2=9$ exceeds the sharp bound 5 of \cite{AA}, see also Theorem \ref{th2D} below.

\section{Finite dimensionality of the space of relative Killing tensors}\label{S2+}

Equation $\mathcal{E}_d$ given by \eqref{Ksi} is a linear PDE for $K$, which is algebraic in $L$,
however its prolongations contains jets of $L$, so we may consider it as a system on $K,L$.
As such it contains $\binom{m+d}{d+1}$ equations on $\binom{m+d-1}d+m$ unknowns,
minus the gauge freedom that is 1 function, so roughly $\binom{m+d-1}d+m-1$ unknowns. 
Since $\binom{m+d-1}{d+1}>m-1$ for $m>2$ the system $\mathcal{E}_d$ appears overdetermined except for $m=2$.
As shown in \cite{Kr2} for $m=2$, $d=1$ the system is determined, and this generalizes to
the case $d>1$.

Our main interest is when this system has more than one solution $K$ for the same $L$.
Let us enhance \eqref{Ksi} with the constraint $\dim\mathcal{K}^L_{m,d}>1$, and denote the obtained system of equations
by $\hat {\mathcal{E}}_d$ for a fixed $d$; now this is a system on both $K$ and $L$ and it is algebraic nonlinear. 

This system $\hat {\mathcal{E}}_d$ is not compatible, and adding the compatibility conditions (that is, 
bringing the system to involution) enhances $\hat{\mathcal{E}}_d$ with new equations that are already differential in $L$. 
We claim that this system is over-determined and moreover, modulo gauge, is of finite type.
(This means we get a finite type system by ``fixing gauge'' 
even though this may be non-invariant, for instance coordinate-wise.)

 \begin{theorem}\label{thm2}
The space of solutions $(K,L)$ of system $\hat {\mathcal{E}}_d$, 
considered modulo conformal rescalings \eqref{gauge}, is finite-dimensional.
 \end{theorem}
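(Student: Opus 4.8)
The plan is to present $\hat{\mathcal{E}}_d$, after prolongation to involution, as a system of finite type on $(K,L)$ modulo the gauge \eqref{gauge}; finite type then injects the solution space, through its jet at a single point, into a finite-dimensional fibre, which gives the claim. The input is Theorem \ref{first}: for frozen $L$ the relative Killing equation \eqref{Ksi} is of finite type of order $d+1$ in $K$, its $d$-th prolongation \eqref{eqE} defining the prolongation connection $D_g^L$ on the rank-$\Lambda_{m,d}$ bundle $W=\mathcal{E}_d^{(d)}$, with $\mathcal{K}^L_{m,d}(g)\cong\ker D_g^L$. I therefore treat $(K,L)$ as a single unknown, a section of $W\oplus T^*M$, subject to \eqref{Ksi}--\eqref{eqE} and to the enhancement $\dim\mathcal{K}^L_{m,d}>1$.

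First I localise the enhancement. Let $\Theta\in\Omega^2(M,\op{End}W)$ be the curvature of $D_g^L$; a section lies in $\ker D_g^L$ iff it is killed by $\Theta$ and by all its covariant derivatives $(D_g^L)^j\Theta$, and as $\op{rank}W=\Lambda_{m,d}<\infty$ the common kernels stabilise after a fixed $N=N(m,d)$ (Ambrose--Singer with the descending chain condition). Thus $\dim\mathcal{K}^L_{m,d}=\Lambda_{m,d}-\op{rank}\bigl(\Theta,D_g^L\Theta,\dots,(D_g^L)^N\Theta\bigr)$, and the enhancement reads $\op{rank}\bigl(\Theta,\dots,(D_g^L)^N\Theta\bigr)\le\Lambda_{m,d}-2$. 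Under $L\mapsto L+d\varphi$ the connection $D_g^L$ changes only by the closed scalar form $d\varphi\cdot\op{Id}$, so $\Theta$ and every $(D_g^L)^j\Theta$ are unchanged; the rank condition is therefore a function of the gauge-invariant $dL$ and its covariant derivatives $\nabla^{(j)}(dL)$ alone, and on simply connected $U$ the class $[L]$ is recorded exactly by $dL$. This is how passing to involution produces equations differential in $L$: the bare integrability conditions $\Theta K=0,(D_g^L\Theta)K=0,\dots$ of the $K$-part turn, once a two-dimensional solution space is imposed, into rank conditions on the tower $dL,\nabla dL,\dots$

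The heart of the matter, and the step I expect to be the main obstacle, is to show that prolonging these rank conditions renders the system finite type in $dL$: that there is an order $p=p(m,d)$ beyond which each symmetrised derivative $\nabla^{(p)}(dL)$ becomes a universal algebraic function of $g$ and of the lower jets $\{\nabla^{(j)}(dL):j<p\}$ on the solution locus. The strategy exploits that the rank drops by at least two, so the stable kernel $V\subset W$ is a parallel subbundle of rank $\ge2$; hence $\Theta$ and every $(D_g^L)^j\Theta$ annihilate $V$. Differentiating these identities along $M$ and inserting $D_g^L V=0$ produces Bianchi-type relations among the $\nabla^{(j)}(dL)$, and the demand that a two-dimensional $V$ survive all of them is what forces the top derivative to close up. Making this precise---using the projective normal form $\hat\nabla=\nabla+\Upsilon\circ\op{Id}$, $\hat L=L-\Upsilon$ of the remark after Theorem \ref{first} to isolate the leading contribution of $\nabla^{(p)}L$ to the successive rank conditions, and checking it is invertible on the two-dimensional kernel---is the technical core. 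It is the $\dim V\ge2$ analogue of the determined-system computation for $m=2,\ d=1$ in \cite{Kr2}, whose mechanism I expect to persist for all $d$, exactly as asserted there.

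Granting finite type in $dL$, the admissible $dL$, hence the gauge classes $[L]$, form a finite-dimensional set; fibring the solutions over $[L]$ with fibre $\mathcal{K}^L_{m,d}$ of dimension $\le\Lambda_{m,d}$ (Theorem \ref{first}) yields a finite-dimensional affine algebraic solution space, as claimed. As a consistency check, any independent pair $K_1,K_2\in\mathcal{K}^L_{m,d}$ gives a rational integral $K_1/K_2$; dividing out the fixed common factor $C_L=\op{gcd}\mathcal{K}^L_{m,d}$---legitimate because a space with trivial gcd and $\dim\ge2$ contains a coprime pair, an infinite field not being a finite union of the proper subspaces $\{P:\pi\mid P\}$ indexed by the irreducible $\pi$---lands one in some $\mathcal{R}_{m,r,r}(g)$ with $r\le d$, finite-dimensional by Theorem \ref{Tfri} and Corollary \ref{cr1}. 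This already bounds the moving part $\tilde L=L-L_{C_L}$ of the cofactor through the image of $K_1/K_2\mapsto\tilde L$; the finite-type argument above is exactly what additionally pins down the residual part $L_{C_L}$ carried by the fixed factor, completing the bound on $[L]$.
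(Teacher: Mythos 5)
Your framework (the prolongation connection $D_g^L$ on $W=\mathcal{E}_d^{(d)}$, curvature rank conditions encoding $\dim\mathcal{K}^L_{m,d}>1$, and reduction of the gauge \eqref{gauge} to $dL$) is in spirit the same strategy as the paper's, phrased in connection language rather than via symbols of prolonged equations. But there is a genuine gap exactly at the step you yourself flag as ``the technical core'': you never prove that the rank/compatibility conditions close up to a finite-type system on $dL$; you only state that you \emph{expect} the mechanism of \cite{Kr2} (which there concerns the determined case $m=2$, $d=1$) to persist for all $d$ and for kernels of dimension $\geq2$. This is precisely where the paper does its actual work: it derives the explicit compatibility relations \eqref{compL} by symmetrizing the second prolongation \eqref{prol+} of \eqref{Ktopjet}, converts them into the differential system \eqref{eqL} on $L$ when the relative Killing tensors span $S^dTM$ at generic points, computes the block symbol $\begin{pmatrix}A&0\\0&B\end{pmatrix}$ to show that the combined system (\ref{eqE})+(\ref{eqL}) becomes of finite type once the gauge freedom is absorbed into $dL$, and---crucially---handles the degenerate case where the tensors do \emph{not} span, by exploiting the constraint $\dim\mathcal{K}^L_{m,d}>1$ to extract, from degenerations of \eqref{compL}, the explicit equations $L_{[i;j]k_1\dots k_{d-1}}\doteq0$ (worked out for $d=1$, $d=2$, and indicated for general $d$). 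Without some version of this computation your argument is a plan, not a proof: nothing in your text rules out that the rank conditions on $(\Theta,D_g^L\Theta,\dots)$ cut out an infinite-dimensional family of $dL$'s, which is exactly what happens when $\dim\mathcal{K}^L_{m,d}=1$ and what must be excluded by hand.

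Two smaller inaccuracies. First, under $L\mapsto L+d\varphi$ the connection $D_g^L$ does not change ``only by the closed scalar form $d\varphi\cdot\op{Id}$'': the identification of fibres of $W$ uses the full jets of $e^\varphi K$, so $D_g^{L+d\varphi}$ is conjugate to $D_g^L$ by a unipotent bundle map built from jets of $\varphi$, and the curvatures are conjugated, not equal. Ranks are conjugation-invariant, so your rank conditions survive, but the stronger claim that they are functions of $dL$ and its covariant derivatives alone is not justified this way --- note that the paper only \emph{conjectures} (verified for $d=2$) that flatness of $D_g^L$ is equivalent to $dL=0$. Second, the consistency check in your final paragraph leans on Theorem \ref{Tfri} together with Corollary \ref{cr1}; as you note yourself, it leaves the cofactor of the common factor $C_L$ undetermined, so it cannot substitute for the missing finite-type argument, and Corollary \ref{cr1} is anyway established only in the analytic category.
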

 
Let us note that this statement is actually equivalent to Theorem \ref{Tfri} due to the correspondence between
rational integrals and relative Killing tensors, given by Corollary \ref{cr1}.
 
We would like to give another proof using relative Killing tensors, which reveals algebraic structure of the space of rational integrals. 
For this let us give more details on equations \eqref{eqE} having the form
 \begin{equation}\label{prEq}
K_{(\sigma;i)\varkappa}=L_{(i;\varkappa}K_{\sigma)}+(lots) 
 \end{equation}
where symmetrization in the rhs only concerns $(i,\sigma)$ and $(lots)$ stands for ``lower order terms'' wrt $L$
(and also $K$ from the lhs). Resolution of those by the top-jets of $K$ has the following form
(below $[...]$ stands for skew-symmetrization by the indicated indices):

Denote by $\sigma=\sigma'\sqcup\sigma''$ an ordered splitting, i.e.\ a disjoint union such that $i'<i''$ $\forall$ $i'\in\sigma'$,
$i''\in\sigma''$. 

 \begin{lemma}\label{L3}
For multi-indices $\sigma=\sigma_1\dots\sigma_d$, $\tau=\tau_0\tau_1\dots\tau_d$ of lengths $|\sigma|=d$, $|\tau|=d+1$,
denote $\op{Sym}_\sigma$ and $\op{Sym}_\tau$ the operators of symmetrization by those indices.
Then we have
 \begin{equation}\label{Ktopjet}
K_{\sigma;\tau}=L_{(\tau_0;\tau_1\dots\tau_d)}K_\sigma-2\op{Sym}_\sigma\op{Sym}_\tau\Bigl\{\sum_{k=1}^d(-1)^k
\binom{d}{k}\sum L_{[\sigma_i;\tau_j]\sigma'\tau'}K_{\sigma''\tau''}\Bigr\}+(lots),
 \end{equation}
where the last sum is over $\sigma_i\in\sigma$, $\tau_j\in\tau$ with 
$\sigma\setminus\sigma_i=\sigma'\sqcup\sigma''$, $\tau\setminus\tau_j=\tau'\sqcup\tau''$
and $k=|\tau''|=d-|\sigma''|$.
 \end{lemma}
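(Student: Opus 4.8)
The plan is to strip the statement down to pure algebra and recognize it as the explicit inversion of a single symmetrization operator. First I would observe that reordering covariant derivatives costs only curvature terms, so modulo $(lots)$ the object $K_{\sigma;\tau}$ is symmetric in the derivative indices $\tau$ (length $d+1$) as well as in the value indices $\sigma$ (length $d$); thus it lies in $S^dV^*\otimes S^{d+1}V^*$, $V=T^*_xM$. I would encode it as a bi-homogeneous polynomial $K(p,q)$ of bidegree $(d,d+1)$, with $p$ carrying $\sigma$ and $q$ carrying $\tau$. The prolonged equation \eqref{eqE} at $k=d$ constrains \emph{only} the $(\sigma,\tau_0)$-symmetrization $K_{(\sigma;\tau_0)\tau_1\dots\tau_d}$, which in polynomial language is the image of $K$ under the partial-symmetrization operator $\mathcal{P}=\sum_a p_a\,\partial/\partial q_a$ (it removes one $q$ and adjoins it to the $p$-group). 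Hence \eqref{eqE} becomes the equation $\mathcal{P}K=R$, where $R$ is the given right-hand side of \eqref{eqE}, whose leading term in the number of derivatives of $L$ is the symmetrization of $L_{\tau_0;\tau_1\dots\tau_d}K_\sigma$.

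Next I would establish invertibility, which is the conceptual core. The operators $\mathcal{P}$, $\mathcal{Q}=\sum_a q_a\,\partial/\partial p_a$ and $h=\sum_a(p_a\,\partial/\partial p_a-q_a\,\partial/\partial q_a)$ form an $\mathfrak{sl}_2$-triple, $[\mathcal{P},\mathcal{Q}]=h$ and $[h,\mathcal{P}]=2\mathcal{P}$. On bidegree $(d,d+1)$ the element $h$ acts as the scalar $-1$, so $\mathcal{P}$ is the raising operator sending $h$-weight $-1$ to weight $+1$. By $\mathfrak{sl}_2$ representation theory a raising operator is injective on every weight space of negative weight (a weight-$(-1)$ vector lies strictly below the top of each irreducible summand, hence is never a highest-weight vector), and since the weight spaces $\pm1$ have equal dimension it is an isomorphism $S^dV^*\otimes S^{d+1}V^*\xrightarrow{\sim}S^{d+1}V^*\otimes S^dV^*$. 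This already shows that $\mathcal{P}K=R$ has a unique solution, i.e. that $K_{\sigma;\tau}$ is determined by the lower-order data and \eqref{eqE} at $k=d$ is the Frobenius (finite-type) resolution invoked in Theorem \ref{first}.

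To reach the closed form \eqref{Ktopjet} I would then compute $\mathcal{P}^{-1}$ explicitly. Starting from the naive candidate $K^{(0)}=L_{(\tau_0;\tau_1\dots\tau_d)}K_\sigma$ (the first term of \eqref{Ktopjet}) and applying $\mathcal{P}$, the result differs from $R$ precisely because $\mathcal{P}$ acts by a different scalar on each two-row $GL(m)$-component $S^{(2d+1-c,\,c)}V^*$ of the decomposition of $S^dV^*\otimes S^{d+1}V^*$; the discrepancy consists of terms in which one value index $\sigma_i$ has been traded for a derivative index $\tau_j$. Correcting these iteratively, each correction exchanging one further pair and contributing the skew factor $L_{[\sigma_i;\tau_j]\sigma'\tau'}K_{\sigma''\tau''}$, produces a telescoping inclusion–exclusion whose coefficients come out as $(-1)^k\binom{d}{k}$ with $k$ the number of exchanged indices; equivalently one expands $\mathcal{P}^{-1}$ as the polynomial in $\mathcal{Q}\mathcal{P}$ dictated by the eigenvalues of $\mathcal{P}$ on the components $S^{(2d+1-c,\,c)}V^*$. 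Re-symmetrizing over $\sigma$ and $\tau$ and absorbing the curvature commutators into $(lots)$ gives exactly \eqref{Ktopjet}.

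The hard part is not the invertibility, which is immediate from the $\mathfrak{sl}_2$-structure, but the combinatorics of the explicit inverse: matching the signs and binomial coefficients, and in particular verifying that the skew-symmetrization pattern $L_{[\sigma_i;\tau_j]\sigma'\tau'}$ subject to $k=|\tau''|=d-|\sigma''|$ is precisely what restores full $\tau$-symmetry. I expect the cleanest confirmation to be the direct one: apply $\mathcal{P}$ to the right-hand side of \eqref{Ktopjet} and check component by component that every contribution beyond the $(\sigma,\tau_0)$-symmetrization of $L_{\tau_0;\tau_1\dots\tau_d}K_\sigma$ cancels modulo $(lots)$, the cancellation being an instance of the identity $\sum_k(-1)^k\binom{d}{k}(\cdots)=0$. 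Some care is additionally required to confirm that the commutator (curvature) terms arising when the $\nabla$'s are reordered are genuinely of lower order and hence may be collected into $(lots)$.
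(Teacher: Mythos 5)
Your reduction of the lemma to inverting a partial symmetrization operator, and your $\mathfrak{sl}_2$ argument for its invertibility, are both correct: with $\mathcal{P}=\sum_a p_a\p/\p q_a$, $\mathcal{Q}=\sum_a q_a\p/\p p_a$, $h=[\mathcal{P},\mathcal{Q}]$, the bidegree-$(d,d+1)$ space is the weight $-1$ space, no vector of negative weight can be highest-weight, and the weight $\pm1$ spaces are equidimensional, so $\mathcal{P}$ is an isomorphism onto bidegree $(d+1,d)$. This is a genuinely different, self-contained route to the solvability statement: the paper at this point simply cites \cite{Th,Wo} for the fact that the $d$-th prolongation of a system $K_{(\sigma;i)}=\Psi_{\sigma,i}(K_\tau:|\tau|=d)$ can be resolved for the top derivatives $K_{\sigma;\tau}$. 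What your argument buys is a proof of unique resolvability (hence also of the finite-type property used in Theorem \ref{first}) without appeal to the literature. What it does not buy, by itself, is the explicit formula \eqref{Ktopjet}, and both your plan and the paper's proof rely on uniqueness in the same way: once uniqueness is known, it suffices to apply the partial symmetrization to the claimed right-hand side and check that \eqref{prEq} is recovered.

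That check is the actual content of the lemma, and it is exactly where your proposal stops. The paper's proof consists of this verification carried out explicitly: it compares the coefficients of each monomial $L_{i;\alpha}K_\beta$ ($|\alpha|=|\beta|=d$) on the two sides after symmetrizing $K_{(\sigma_1\dots\sigma_d;\tau_0)\tau_1\dots\tau_d}$, finding that the coefficient of $L_{\tau_0;\tau_1\dots\tau_d}K_{\sigma_1\dots\sigma_d}$ equals $\tfrac1{d+1}$ as required, while the mixed coefficients of $L_{\tau_0;\sigma'\tau'}K_{\sigma''\tau''}$ with $|\sigma'|=|\tau''|=p>0$ cancel via an alternating-sign binomial identity. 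You describe this computation as ``the cleanest confirmation'' but do not perform it; likewise your alternative derivation of $\mathcal{P}^{-1}$ as a polynomial in $\mathcal{Q}\mathcal{P}$ determined by its scalars on the Pieri components $S^{(2d+1-c,\,c)}V^*$ is viable in principle, but you compute neither those eigenvalues nor the interpolating polynomial, and you do not match the outcome against the coefficients $(-1)^k\binom{d}{k}$ and the skew pattern $L_{[\sigma_i;\tau_j]\sigma'\tau'}$ with $k=|\tau''|=d-|\sigma''|$. Asserting that the inclusion--exclusion ``comes out as'' the stated coefficients is a conjecture, not a proof: the signs, the binomials, and the precise splitting rule $\sigma\setminus\sigma_i=\sigma'\sqcup\sigma''$, $\tau\setminus\tau_j=\tau'\sqcup\tau''$ are exactly what must be certified. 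So as written, your argument establishes existence and uniqueness of a resolution of the form \eqref{prEq} but leaves the closed form \eqref{Ktopjet} unestablished; to complete it you must carry out the coefficient comparison (the two displayed computations in the paper) or genuinely evaluate $\mathcal{P}^{-1}$ on the leading tensor $\op{Sym}\{L_{\tau_0;\tau_1\dots\tau_d}K_\sigma\}$. Your treatment of the curvature commutators as $(lots)$ is fine, since reordering covariant derivatives produces terms with strictly fewer derivatives of $L$.
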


We can make $(lots)$ explicit, but will not need it.
For instance, in the case $d=1$ the $\langle ijl\rangle$ equation is
 $$
K_{i;jl}+K_{j;il}=L_{i;l}K_j+L_{j;l}K_i+L_iK_{j;l}+L_jK_{i;l}
 $$
and adding to it the equation for $\langle ilj\rangle$ and subtracting the equation for $\langle jli\rangle$ we get
 $$
K_{i;(jl)}=L_{(j;l)}K_i+L_{[i;j]}K_l+L_{[i;l]}K_j+L_iL_{(j}K_{l)}-\tfrac12(R_{ij}{}^k{}_l+R_{il}{}^k{}_j)K_k.
 $$
 
 \begin{proof}
It is well-known \cite{Th,Wo} that the $d$-th prolongation of any system 
$K_{(\sigma;i)}=\Psi_{\sigma,i}(K_{\tau}:|\tau|=d)$ over all $\sigma,i$ with $|\sigma|=d$ can be resolved wrt
highest derivatives $K_{\sigma;\tau}$, where $|\sigma|=d$, $|\tau|=d+1$. System \eqref{Ksi} is of this kind.
To verify the formula, one just needs to recheck that the partial symmetrization 
$K_{(\sigma_1\dots\sigma_d;\tau_0)\tau_1\dots\tau_d}$ in the left hand size of this formula gives 
what is stated in \eqref{prEq} for $i=\tau_0$.

To check this one has to compare the coefficients of $L_{i;\alpha}K_\beta$ for various $i,\alpha,\beta$,
$|\alpha|=|\beta|=d$, in the lhs and rhs. For the term
$L_{\tau_0;\tau_1\dots\tau_d}K_{\sigma_1\dots\sigma_d}$ or the terms obtained by a permutation
of $\sigma_1\dots\sigma_d\tau_0$ this coefficient is 
 $$\frac1{d+1}\bigl(\frac1{d+1}+2d\frac12d\frac1d\frac1{d+1}\bigr)=\frac1{d+1}$$
as expected (the factors are due to symmetrization and skew-symmetrization;
note that in the rhs only the case $k=0$ of the first term and $k=1$ of the following terms occur).

For the term $L_{\tau_0;\sigma'\tau'}K_{\sigma''\tau''}$ with $|\sigma'|=|\tau''|=p>0$, $|\sigma''|=|\tau'|=d-p$,
and for other terms obtained by the same permutation, the coefficient is
 \begin{multline*}
\hskip24pt
\frac2{d+1}\left((-1)^{d-p+1}\binom{d}{p}\cdot\tfrac{-1}2\cdot\frac1{\binom{d}{p}}\cdot\frac1{\frac{(d+1)!}{(d-p)!p!}}
+ (-1)^{d-p+1}\binom{d}{p}\cdot(d-p)\cdot\tfrac12\cdot\frac1{\frac{d!}{(d-p-1)!p!}}\cdot\frac1{\binom{d+1}{p+1}}
\right.\\
\left.+ (-1)^{d-p}\binom{d}{p-1}\cdot p\cdot\tfrac12\cdot\frac1{\frac{d!}{(d-p)!(p-1)!}}\cdot\frac1{\binom{d+1}{p}}
\right)=0.\hskip24pt
 \end{multline*}Similarly one checks the coefficients of $L_{\tau_1;\sigma'\tau'}K_{\sigma''\tau''}$ where $\tau_0\in\tau'$
and the terms obtained by a permutation of $\sigma_1\dots\sigma_d\tau_0$, etc, which again give 0, as expected.
This proved the claim.
 \end{proof}

Now the idea to prove the theorem is as follows. Lemma \ref{L3} expresses higher derivatives of $K$, but $L$ is still 
un-constrained. We explore the compatibility conditions to express higher derivatives of $L$ (modulo gauge).

 \begin{proof}[Proof of Theorem \ref{thm2}]
Let us prolong system \eqref{Ktopjet} one more time, i.e.\ apply covariant derivative $\nabla_s$ to it:
 \begin{equation}\label{prol+}
K_{\sigma;\tau s}=
L_{(\tau_0;\tau_1\dots\tau_d)s}K_\sigma-2\op{Sym}_\sigma\op{Sym}_\tau\Bigl\{\sum_{k=1}^d(-1)^k
\binom{d}{k}\sum L_{[\sigma_i;\tau_j]s\sigma'\tau'}K_{\sigma''\tau''}\Bigr\}
+(lots) 
 \end{equation}
This system is symmetric in $\tau s$ modulo $(lots)$, and this gives the following type relation on $L$:
 \begin{equation}\label{compL}
\sum r_{i,j,\sigma,\tau}L_{[i;j]\tau}K_\sigma=(lots)
 \end{equation}
with the sum over $i,j$, multi-indices $\tau$, $\sigma$, and certain combinatorial coefficients $r_{i,j,\sigma,\tau}$, 
which are explicit from \eqref{prol+}.
If the relative Killing tensor fields span the whole $S^dTM$ at almost any point, then one can see that this implies 
the following system for any indices $i,j$ and multiindex $\tau$, $|\tau|=d$:
 \begin{equation}\label{eqL}
L_{[i;j]\tau}= G_{ij\tau}(\{L_{k;\nu},K_{\mu;\nu}:1\leq k\leq m,|\mu|=d,|\nu|\leq d\}).
 \end{equation}
Thus we see that system (\ref{eqE})+(\ref{eqL}), which is the first step in completion to involution of \eqref{eqE}, is
overdetermined. We claim that modulo the conformal rescale freedom it is of finite type.

To demonstrate this let us compute the symbol of the system. Recall (cf.\ \cite{KL1})
that for $i$-th equation of the system this is obtained by 
(i) linearizing the system, (ii) dropping lower order terms, (iii) changing derivation $\p_k$ of the $j$-th unknown to the 
corresponding momenta coordinate $p_k$ -- this contributes to the entry $(i,j)$ of the matrix.
In our case the highest order terms of $K$ are contained in \eqref{eqE} while the highest order terms of $L$ are contained 
in \eqref{eqL}. Thus the symbol matrix has block form
 $$
\begin{pmatrix}A & 0\\ 0 & B\end{pmatrix}.
 $$ 
Here $A$ is a matrix of size $\binom{m+d}{d+1}\times\binom{m+d-1}{d}$ consiststing of polynomials in $p$
of degree $d+1$, it corresponds to the symbol of the Killing equation for $d$-tensors; 
whereas $B$ of size $\binom{m}2\cdot\binom{m+d-1}{d}\times m$
consists of rows of the form $v\cdot p_\mu$, where $|\mu|=d$ and $v$ is linear in $p$,
spanning the orthogonal complement to the vector $p=(p_1,\dots,p_m)$ wrt the dot-product.

Recall that the system is of finite type iff the complex projective characteristic variety is empty, 
i.e.\ the affine variety is 0, cf.\ \cite{KL1}.
The affine characteristic variety consists of covectors $p$, where the rank of the above symbol matrix drops. 
In our case this is equivalent to drop of the rank for either $A$ or $B$. But $A$ corresponds to the Killing equation, which is
of finite type, meaning that for (complex) $p\neq0$ the rank is maximal. On the contrary $B$ is always of rank $<m$ 
because the vector-column $p$ is in its kernel.

The latter means that every covector is characteristic, however this was expected, as the conformal change freedom
$K\mapsto e^\varphi K$ implies that the solution space depends on (at least) a function $\varphi$ of $m$ arguments. 
Yet we may eliminate this freedom.
Indeed, system \eqref{eqL} can be considered as a complete system  of order $d$ on the coefficients of $dL$, which 
completely resolves the gauge freedom $L\mapsto L+d\varphi$. With this, the characteristic variety becomes trivial, and consequently this system has finite type.

In the case we have non-maximal set of relative Killing vectors at generic point of $M$, the system
may become of infinite type. Indeed, this is precisely what happens for $\dim\mathcal{K}_{m,d}^L=1$.
However with the condition $\dim\mathcal{K}_{m,d}^L>1$ specifying equation $\hat {\mathcal{E}}_d$
the system is still of finite type. 

This is a lengthy argument, for simplicity, let us make it explicit in the case $d=1$, where equation \eqref{Ktopjet} becomes
 $$
K_{a;bc}=K_aL_{(b;c)}+2\op{Sym}_{bc}\bigl\{L_{[a;c]}K_b\bigr\}+(lots)
 $$
and the corresponding relations \eqref{compL} are
 $$
K_aL_{[c;d]b}-K_bL_{[c;d]a}+K_cL_{[a;b]d}-K_dL_{[a;b]c}\doteq0,
 $$
where by $\doteq$ we denote equality modulo $(lots)$. This implies the equations
 $$
K_aL_{[c;b]b}+K_cL_{[a;b]b}-K_b(L_{[a;b]c}+L_{[c;b]a})\doteq0 
 $$
when $d=b$, and furthermore when $c=a$ we get
 $$
K_aL_{[a;b]b}-K_bL_{[a;b]a}\doteq0
 $$
and we also may use the identity $L_{[a;b]c}+L_{[b;c]a}+L_{[c;a]b}\doteq0$.

Assume now that $K_1$ and $K_2$ are independent non-zero components at a point $x\in M$, while the other components
$K_i$ may vanish (this comes without loss of generality by applying a coordinate transformation). 
By choosing various indices in the above identities we obtain for all $i,j,k$
 $$
L_{[i;j]k}\doteq0,
 $$
which clearly yields finite type for $dL$ and hence finite type for $L$ modulo gauge.
 
In the case $d=2$ equation \eqref{Ktopjet} becomes
 $$
K_{ab;cde}=K_{ab}L_{(c;de)}+4\op{Sym}_{ab}\op{Sym}_{cde}\bigl\{L_{[a;c]d}K_{be}
-2\op{Sym}_{ab}\op{Sym}_{cde}\bigl\{L_{[a;c]b}K_{de}\bigr\}+(lots)
 $$
and the corresponding relations \eqref{compL} are
 \begin{gather*}
2K_{ab}L_{[e;f]cd}-K_{ac}L_{[e;f]bd}-K_{ad}L_{[e;f]bc}-K_{bc}L_{[e;f]ad}-K_{bd}L_{[e;f]ac}+2K_{cd}L_{[e;f]ab}\\
-K_{ae}(L_{[c;b]df}+L_{[d;b]cf})-K_{be}(L_{[c;a]df}+L_{[d;a]cf})
+K_{af}(L_{[c;b]de}+L_{[d;b]ce})+K_{bf}(L_{[c;a]de}+L_{[d;a]ce})\\
-K_{de}(L_{[a;c]bf}+L_{[b;c]af})-K_{ce}(L_{[a;d]bf}+L_{[b;d]af})
+K_{df}(L_{[a;c]be}+L_{[b;c]ae})+K_{cf}(L_{[a;d]be}+L_{[b;d]ae})\doteq0.
 \end{gather*}
From these equations and its degenerations we can again obtain a finite type system for $dL$.
In the case of general $d$, we can deduce the equation
 $$
L_{[i;j]k_1\dots k_{d-1}}\doteq0,
 $$
which yields finite type for $L$ modulo gauge.
 \end{proof}

 \begin{rk}
Using solutions $(K,L)$ of $\hat{\mathcal{E}}_d$ we can only get rational integrals in $\mathcal{R}_{m,r,s}(g)$ with $r=s=d$.
More generally, by the same technique one can prove that the equation describing admissible $L$, 
i.e.\ such that $\mathcal{S}^L_{r,s}$ is nontrivial,
together with the corresponding $(P,Q)\in\mathcal{S}^L_{r,s}$, is of finite type.
 \end{rk}
 
Let us mention an alternative approach to the proof.
In the part, where we derive compatibility conditions of \eqref{eqE}, note that
symbol of the equivalent system \eqref{Ksi} is the representation $\Gamma_{(d+1)\pi_1}$ of $SL(m)$. 
Its $k$-th prolongation (equiv: symbol of the prolongation)
is the representation $\Gamma_{(d+1)\pi_1}\otimes\Gamma_{k\pi_1}$. For $k=d$ this coincides with the space 
$\Gamma_{(d+1)\pi_1}\otimes\Gamma_{d\cdot\pi_1}\simeq\Gamma_{d\cdot\pi_1}\otimes\Gamma_{(d+1)\pi_1}$
of $(d+1)$-st derivatives of $\{K_\sigma:|\sigma|=d\}$. 

When we prolong one more time we get representation $\Gamma_{(d+1)\pi_1}\otimes\Gamma_{(d+1)\pi_1}$
that splits into $\Gamma_{(d+2)\pi_1}\otimes\Gamma_{d\cdot\pi_1}$, corresponding to 
$(d+2)$-nd derivatives of $\{K_\sigma:|\sigma|=d\}$, and the module $\Gamma_{(d+1)\pi_2}$.
This latter represents the compatibility conditions, annihilating the lhs of \eqref{Ksi} and giving equations \eqref{eqL}
when applied to the rhs. 

 \begin{cor}
The space of admissible $L$ (up to equivalence) is an algebraic set. 
 \end{cor}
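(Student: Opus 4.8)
The plan is to deduce this from the finite-type statement underlying Theorem \ref{thm2}, promoted to the full system on $(P,Q,L)$ as indicated in the preceding remark. First I would fix the combined PDE system $\Sigma_{r,s}$ whose solutions are triples $(P,Q,L)$ with $P\in\mathcal{K}^L_{m,r}(g)$, $Q\in\mathcal{K}^L_{m,s}(g)$ sharing the cofactor $L$, augmented by the compatibility conditions \eqref{eqL} and their analogues for the pair. By the argument of Theorem \ref{thm2} this system is, modulo the gauge equivalence \eqref{gauge}, of finite type: there is an integer $N$ and a complete (Frobenius) prolongation $\Sigma_{r,s}^{(N)}$ expressing all derivatives of order $>N$ through the $N$-jet. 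Consequently, after fixing a base point $x_0\in M$ and a gauge normalization that kills the $\varphi$-freedom, every solution is determined by its value in the finite-dimensional jet fiber $\mathcal{F}:=J^N_{x_0}\Sigma_{r,s}/\text{gauge}$.

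Next I would identify the solutions inside $\mathcal{F}$. For the fixed $x_0$ the background metric $g$ and its jets enter only as constants, so the prolonged equations resolving the top jets, together with the integrability (prolongation-curvature) conditions of the complete system, become polynomial relations in the jet coordinates; hence the set $W\subset\mathcal{F}$ of admissible $N$-jets is a (real) algebraic variety. The space of admissible cofactors is then the image $\pi(W)$ under the projection $\pi:\mathcal{F}\to J^N_{x_0}L$ that forgets $P,Q$, intersected with the open loci $\{P\neq0\}$, $\{Q\neq0\}$ and $\{\op{gcd}(P,Q)=1\}$ coming from the definition of $\mathcal{S}^L_{r,s}$. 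The main obstacle is that the image of an algebraic set under a projection is in general only constructible (Chevalley's theorem), respectively semialgebraic (Tarski--Seidenberg), rather than closed. I would overcome this by projectivizing in the fibers: since the relative Killing equation is linear in $P$ and in $Q$ for fixed $L$, the fiber of $\pi$ over a prescribed jet of $L$ is (cut out in) the product $\mathbb{P}\mathcal{K}^L_{m,r}\times\mathbb{P}\mathcal{K}^L_{m,s}$, and properness of projective morphisms makes $\pi$ a closed map, so that $\pi(W)$ is Zariski-closed, i.e.\ an algebraic set.

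Finally I would descend modulo equivalence. The gauge $L\mapsto L+d\varphi$ has already been eliminated in passing to $\mathcal{F}$ (equivalently one works with the jets of $dL$, on which \eqref{eqL} is a complete system), so $\pi(W)$ already lives in the quotient by \eqref{gauge} and the claim follows. The coprimality requirement $\op{gcd}(P,Q)=1$ only removes a lower-dimensional stratum of $W$ on which $P$ and $Q$ share a common factor; by the factor Lemma such factors are themselves relative Killing tensors, so these strata are again algebraic and passing to the Zariski closure does not affect the conclusion. This exhibits the space of admissible $L$ up to equivalence as an algebraic set, possibly reducible, matching the stratification of rational integrals announced before Corollary \ref{cr1}.
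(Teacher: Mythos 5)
Your proposal follows essentially the same route as the paper's proof: finite type of the completed system (Theorem \ref{thm2} and the remark extending it to bidegree $(r,s)$) reduces every solution to its $N$-jet at a fixed point, the admissible jets form an algebraic set, one projects to the $L$-component, and the gauge freedom \eqref{gauge} is resolved by passing from $L$ to $dL$. The one place you go beyond the paper is in justifying that the projected image is itself algebraic --- via fiberwise projectivization and properness, equivalently via determinantal rank conditions on the system, which is linear in $(P,Q)$ for fixed $L$ --- a point the paper asserts without comment; this is a refinement of the same argument rather than a different approach (though note your appeal to Zariski closure to dispose of the coprimality condition identifies the admissible set only up to a lower-dimensional discrepancy, a subtlety the paper's own proof likewise leaves untreated).
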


 \begin{proof}
The equation $\hat{\mathcal{E}}_d$ on $(K,L)$ considered in Theorem \ref{thm2} is algebraic, and so is its union with 
algebraic constraint and completion to involution $\bar{\mathcal{E}}_d$. Since the solution space is finite-dimensional, 
every (local) solution is uniquely given by its $N$-jet at a fixed point $o$ (for some large $N$). 
Projecting admissible jets $(j^N_oK,j^N_oL)$
to the second component, we obtain an algebraic set of admissible jets of $L$. The equivalence relation
can be resolved in passing from $L$ to $dL$, which is also algebraic in parameters on the solution space.
 \end{proof}

\section{Computation for spaceforms}\label{S3}

Consider now a metric $g_0$ of constant sectional curvature. Since such metrics are projectively equivalent
the count of the dimension of integrals does not depend on the curvature sign.

 \begin{theorem}
$\mathcal{R}_{m,r,s}(g_0)=\{P/Q:P\in\mathcal{K}_{m,r}(g_0),Q\in\mathcal{K}_{m,s}(g_0)\}$. 
 \end{theorem}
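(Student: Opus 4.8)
The plan is to prove the two inclusions separately, the forward one being formal and the reverse one carrying all the content.

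For $\supseteq$, if $P\in\mathcal{K}_{m,r}(g_0)$ and $Q\in\mathcal{K}_{m,s}(g_0)$ then $\{H,P\}=\{H,Q\}=0$, so $\{H,P/Q\}=(\{H,P\}Q-P\{H,Q\})/Q^2=0$ and $P/Q$ is an integral; after cancelling the (polynomial in $p$) greatest common divisor it lands in $\mathcal{R}_{m,r',s'}(g_0)$ with $r'\le r$, $s'\le s$, which is what the right-hand side records. For $\subseteq$ I would avoid analysing the cofactor and instead exploit the abundance of linear integrals on a spaceform. Let $\mathfrak{g}$ be the isometry algebra of $g_0$ (that is $\mathfrak{so}(m+1)$, $\mathfrak{so}(p,q)$, or the (pseudo-)Euclidean algebra according to the sign of the curvature and the signature) and let $\mu\colon T^*M\to\mathfrak{g}^*$ be the moment map, whose components are exactly the Killing $1$-tensors $\xi^a_ip^i\in\mathcal{K}_{m,1}(g_0)$. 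The first key step is to show that on the regular (non-null) stratum the fibres of $\mu$ coincide with the orbits of the geodesic flow: since $2H$ is the relevant quadratic combination of the components of $\mu$, the flow of $H$ preserves each fibre, and a direct inspection identifies that fibre with a single geodesic (for $g_0$ flat, $\mu=(p_i,\,x_ip_j-x_jp_i)$ pins down the line $x_0+\R\,g^{-1}p$; for $\kappa\neq0$ the angular-momentum bivector pins down the great circle/hyperbolic geodesic). Consequently any integral, being constant on geodesics, is locally a function of the components of $\mu$ on the regular stratum, and this extends by continuity exactly as in the proof of Theorem \ref{Tfri}.

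The second step is the homogeneity bookkeeping. Given $F=P/Q\in\mathcal{R}_{m,r,s}(g_0)$, write $F$ as a rational function of the Killing $1$-tensors, clear denominators, and extract $p$-homogeneous components to get $F=\tilde P/\tilde Q$ with $\tilde P,\tilde Q$ polynomials in the components of $\mu$, homogeneous in $p$ of degrees $r$ and $s$. Since each component of $\mu$ is an integral, any product of them is again a polynomial-in-$p$ integral, i.e.\ a Killing tensor; hence $\tilde P\in\mathcal{K}_{m,r}(g_0)$ and $\tilde Q\in\mathcal{K}_{m,s}(g_0)$. Reducing to a coprime representative returns the original pair up to scale by unique factorisation in $C^\omega(M)[p_1,\dots,p_m]$, so $P$ and $Q$ are themselves Killing tensors, proving $\subseteq$. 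Combined with $\supseteq$ this gives the asserted equality; and since $(P,Q)\mapsto P/Q$ is invariant precisely under the common scaling $(P,Q)\mapsto(\lambda P,\lambda Q)$, one reads off $\dim\mathcal{R}_{m,r,s}(g_0)=\Lambda_{m,r}+\Lambda_{m,s}-1$ from $\dim\mathcal{K}_{m,d}(g_0)=\Lambda_{m,d}$.

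The main obstacle is the first step: establishing rigorously, and uniformly across curvature signs and metric signatures, that the fibres of $\mu$ on the regular stratum are exactly the geodesic orbits, so that every integral is a function of the linear integrals. An alternative route would start from Corollary \ref{cr1}, writing $F=P/Q$ with $P\in\mathcal{K}^L_{m,r}(g_0)$, $Q\in\mathcal{K}^L_{m,s}(g_0)$ for a common cofactor $L$, and then using the gauge \eqref{gauge} to reduce to the case $L=0$; there the hard part is showing $dL=0$ for every admissible $L$ on a spaceform, i.e.\ the maximally symmetric instance of the flatness conjecture for $D^L_{g_0}$, which is why I prefer the moment-map argument above.
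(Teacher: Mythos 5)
Your overall route is the same as the paper's: both proofs rest on the superintegrability of spaceforms --- the linear integrals (your moment map components) separate geodesic orbits, so every integral is a function of them --- followed by momentum-homogeneity bookkeeping to exhibit the rational integral as a ratio of polynomials in Killing vectors. Your moment-map packaging is a coordinate-free version of the paper's explicit algebraic coordinates $K_1,\dots,K_{2m-1},J$, and your ``fibres of $\mu$ are geodesic orbits'' is exactly the paper's ``$2m-1$ functionally independent integrals affine in momenta''; the one genuine (small) advantage of your formulation is that it treats all curvature signs uniformly, where the paper does the flat case explicitly and transfers to nonzero curvature by the same resonance property.

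However, two points in your second step do not go through as written. First, Step 1 only gives that $F$ is a \emph{smooth} function of the components of $\mu$; the assertion that it is a \emph{rational} function of them needs an argument (the paper gets this from the fact that $(x,p)\mapsto(K,J)$ is an algebraic diffeomorphism off the null cone). More seriously, even granting rationality, extracting $p$-homogeneous components of a rational representation only yields $F=\tilde P/\tilde Q$ with $\tilde P,\tilde Q$ homogeneous polynomials in the linear integrals of degrees $(r+l,s+l)$ for some $l\geq 0$, not degrees $(r,s)$ as you claim; and you cannot simply divide out the common degree-$l$ factor, because a factor of a Killing tensor is in general only a \emph{relative} Killing tensor (see the first lemma of Section \ref{S1}, and the first example of Section \ref{S5}, where $R_1,R_2$ divide the Killing tensor $F_1$ but are not themselves Killing). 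The paper closes precisely this gap by working with the given coprime pair $(P,Q)$: in the $(K,J)$-coordinates one has $P=\tilde P R$, $Q=\tilde Q R$ with all $J$-dependence in the common factor $R$, and coprimality of $(P,Q)$ forces $R$ to be constant, which is what delivers the exact degrees $\tilde r=r$, $\tilde s=s$. Second, your concluding claim that ``$P$ and $Q$ are themselves Killing tensors'' is false in general: the coprime representative is unique only up to a unit $u(x)$, so $P,Q$ are Killing only up to a common factor $e^\varphi$ --- this is exactly the gauge ambiguity \eqref{gauge}. That last claim is harmless overreach, since the theorem asserts only that \emph{some} representation has both entries in $\mathcal{K}_{m,r}(g_0)\times\mathcal{K}_{m,s}(g_0)$; but the degree bookkeeping above is a genuine hole and must be repaired, e.g.\ by the paper's common-factor argument.
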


 \begin{proof}
Let us start with the simplest case $m=2$ and the Euclidean metric $H=\tfrac12(p_1^2+p_2^2)$.
In this case we have three linear integrals (Killing vectors) $K_1=p_1$, $K_2=p_2$ and $K_3=x_1p_2-x_2p_1$,
which are functionally (and hence algebraic) independent. In fact they are independent in the complement to 
the null cone $2H=K_1^2+K_2^2=0$ (this is nontrivial in complexification but is the zero section in the real case).
If we introduce $J=x_1p_1+x_2p_2$ then transformation between coordinates $(p_1,p_2,x_1,x_2)$ and
$(K_1,K_2,K_3,J)$ is an algebraic diffeomorphism outside $H=0$, and we can pushforward vector fields. 

In particular, the vector field $\xi=p_1\p_{p_1}+p_2\p_{p_2}$ corresponds to $K_1\p_{K_1}+K_2\p_{K_2}+K_3\p_{K_3}+J\p_J$.
Let us show how this implies that all Killing tensors are combinations of Killing vectors. Any (smooth) first integral can be expressed 
as $F=F(K_1,K_2,K_3)$. If it is a Killing $d$-tensor, it also satisfies the condition $\xi(F)=d\cdot F$, which means
that $F$ is a homogeneous polynomial of degree $d$ in $K_1,K_2,K_3$.

If $F=P/Q$ is a rational function with $\deg P=r$, $\deg Q=s$, then expanding $P$ and $Q$ in variables $K_1,K_2,K_3,J$
we notice that $P=\tilde{P}(K_1,K_2,K_3)R(K_1,K_2,K_3,J)$ and $Q=\tilde{Q}(K_1,K_2,K_3)R(K_1,K_2,K_3,J)$, 
where $\tilde{P},\tilde{Q},R$ are such functions that 
$\xi(\tilde{P})=\tilde{r}\cdot\tilde{P}$, $\xi(R)=l\cdot R$, $\xi(\tilde{Q})=\tilde{s}\cdot\tilde{Q}$ and 
$\tilde{r}=r-l$, $\tilde{s}=s-l$ for some integer $l$.
Thus $\tilde{P}$ and $\tilde{Q}$ are polynomials in $K_1,K_2,K_3$ and $F=\tilde{P}/\tilde{Q}$ is a rational combination 
of Killing tensors. In fact, since $R$ is also a polynomial in $J$, it follows that for irreducible representation $F=P/Q$ we have 
no common factors and hence $\tilde{r}=r$, $\tilde{s}=s$.

Now consider the Euclidean metric $g_0$ in general dimension $m$: $H=\tfrac12\sum_1^mp_i^2$.
Basic Killing vectors are $K_i=p_i$ and $K_{ij}=x_ip_j-x_jp_i$ with basic syzygies $K_iK_{jl}+K_jK_{li}+K_lK_{ij}=0$.
Thus while there are $\binom{m+1}2$ linearly independent Killing vectors, only $2m-1$ of them are functionally and
hence algebraically independent. Near every point we can choose an index $k$ so that $K_i$ and $K_{kj}$ for $j\neq k$
are algebraically independent. Adding $J=\sum_1^mx_ip_i$ we get a coordinate system related to $p_i,x_i$ via an algebraic
diffeomorphism. 

Under this map, the vector field $\xi=\sum_1^mp_i\p_{p_i}$ corresponds to 
$\sum_{i=1}^nK_i\p_{K_i}+\sum_{j\neq k}K_{kj}\p_{K_{kj}}+J\p_J$. Now the rest of argumetns goes precisely as for $m=2$,
and we conclude here also that rational integrals $F=P/Q$ are generated by Killing vectors.

Finally, we note that the important feature of the flat case was the resonant behavior (superintegrability: $2m-1$ functionally 
independent integrals affine in momenta) that also holds for other spaces of constant curvature, whence the conclusion.
 \end{proof}
 
 \begin{cor}
$\dim\mathcal{R}_{m,r,s}(g_0)=\Lambda_{m,r}+\Lambda_{m,s}-1$.
 \end{cor}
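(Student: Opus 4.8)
The plan is to deduce the corollary from the Theorem just proved, which identifies $\mathcal{R}_{m,r,s}(g_0)$ with the image of the parametrization map
\[
\Phi\colon\ \mathcal{K}_{m,r}(g_0)\times\mathcal{K}_{m,s}(g_0)\ \longrightarrow\ \mathcal{R}_{m,r,s}(g_0),\qquad (P,Q)\mapsto P/Q,
\]
defined on the pairs with $P,Q\neq0$. By the flat-case count recalled in the introduction the source is an irreducible vector space of dimension $\dim\mathcal{K}_{m,r}(g_0)+\dim\mathcal{K}_{m,s}(g_0)=\Lambda_{m,r}+\Lambda_{m,s}$. Since the image is an affine variety (Remark after Theorem \ref{Tfri}) and $\Phi$ is dominant onto it, the dimension will follow from the fiber-dimension theorem once I compute the generic fiber of $\Phi$. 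First I would note that a generic pair is relatively prime in the momenta: the condition $\op{gcd}(P,Q)\neq1$ is Zariski-closed and proper, since e.g.\ $(p_1^r,p_2^s)$ is a coprime pair of Killing tensors, so the reduced pairs form a dense open subset of full dimension $\Lambda_{m,r}+\Lambda_{m,s}$.

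The main step is the fiber computation over a reduced fraction. If $P/Q=P'/Q'$ with both sides reduced, then $PQ'=P'Q$, and, working with the UFD $C^\omega(M)[p_1,\dots,p_m]$ in the $p$-variables exactly as in the factorization lemmas of Section \ref{S1}, relative primality forces $P'=cP$ and $Q'=cQ$ for a single unit $c$, i.e.\ a nonzero function of $x$ alone. The heart of the matter is to show that $c$ is constant. Here I would use that $P$ and $cP$ are both Killing tensors: expanding the Killing equation $\op{sym}\nabla(cP)=0$ and using $\op{sym}\nabla P=0$ leaves $\op{sym}(dc\otimes P)=0$, which as a polynomial identity in $p$ reads $L_c\cdot P\equiv0$ with $L_c=\sum_i(\p_{x_i}c)\,p_i$. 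Because $C^\omega(M)[p_1,\dots,p_m]$ has no zero divisors and $P\neq0$, this forces $L_c\equiv0$, hence $dc=0$ and $c$ is locally constant. Thus the fiber of $\Phi$ over a reduced fraction is exactly the one-dimensional orbit of the simultaneous scaling $(P,Q)\mapsto(\lambda P,\lambda Q)$, $\lambda\in\C^*$ (over $\R$ the real dimension count is the same).

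Applying the fiber-dimension theorem to the dominant map $\Phi$ with irreducible $(\Lambda_{m,r}+\Lambda_{m,s})$-dimensional source and generic fiber of dimension $1$ then yields
\[
\dim\mathcal{R}_{m,r,s}(g_0)=\Lambda_{m,r}+\Lambda_{m,s}-1,
\]
as claimed. The only delicate point is this fiber computation, specifically ruling out a non-constant unit $c(x)$ relating two representatives of the same fraction; the argument above reduces precisely this to the absence of zero divisors in the momentum-polynomial ring, which is the same observation already invoked in Section \ref{S1}, so I expect no further obstruction.
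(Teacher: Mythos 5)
Your proof is correct and follows essentially the same route the paper intends: the corollary is presented as an immediate consequence of the theorem identifying $\mathcal{R}_{m,r,s}(g_0)$ with ratios of Killing tensors, the $-1$ accounting precisely for the simultaneous rescaling of the pair $(P,Q)$. Your fiber computation --- in particular ruling out a nonconstant unit $c(x)$ because $cP$ can only be a Killing tensor when $dc=0$, using that the momentum-polynomial ring has no zero divisors --- is exactly the detail needed to make that count rigorous.
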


\section{The case of a Riemann surface}\label{S4}

For dimension $m=2$ we can choose local isothermal coordinates, in which $g=e^{2\lambda(x,y)}(dx^2+dy^2)$
or equivalently $H=\tfrac12e^{-2\lambda(x,y)}(p^2+q^2)$, where we denote $p=p_1$ and $q=p_2$.

It was proved by Agafonov and Alves \cite{AA} that for Lorentzian metrics $g$ on a surface the dimension of 
$\mathcal{R}_{2,1,1}(g)$ can be either 5 or 3 (or 0 if this space is empty). 
Below we will prove this in Riemannian signature by a different method.
Note that even though the Riemannian and Lorentzian cases are related by a Wick rotaiton, the problems
of rational integrals (complexification of a nonlinear functions) are not readily equivalent,
and so our result does not directly follow from that of \cite{AA}.

 \begin{theorem}\label{th2D}
The space $\mathcal{R}_{2,1,1}(g)$ of (local) fractional-linear integrals of a metric $g$, if non-empty, has dimensions 5 or 3.
In the former case $g$ has constant curvature, the cofactor is trivial $L\sim0$ and the space of solutions is connected. 
In the latter case the space of solutions has finitely many components.
 \end{theorem}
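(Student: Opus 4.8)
The plan is to deduce everything from the relative-Killing-tensor picture of Section~\ref{S1}. By Corollary~\ref{cr1} a fractional-linear integral is $F=P/Q$ with $P,Q\in\mathcal{K}^L_{2,1}(g)$ relatively prime (here: not proportional as linear forms in $p$) and sharing one cofactor $L$; conversely every such pair yields an integral. Hence $\mathcal{R}_{2,1,1}(g)$ is empty unless some $L$ is admissible, i.e.\ $N(L):=\dim\mathcal{K}^L_{2,1}(g)\geq2$, and by Theorem~\ref{first} we have $N(L)\leq\Lambda_{2,1}=3$ throughout. For a fixed admissible $L$ the integrals coming from it are parametrized by $(P,Q)\in\mathcal{K}^L_{2,1}\times\mathcal{K}^L_{2,1}$ modulo a common scalar and modulo the gauge \eqref{gauge} (which leaves $P/Q$ unchanged and only rescales $\mathcal{K}^L$), so they form a set of dimension $2N(L)-1$, in agreement with the projectivization formula after Corollary~\ref{cr1}. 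Thus $\dim\mathcal{R}_{2,1,1}(g)$ equals the maximum, over the stratification by $N$, of $(\text{dimension of the admissible locus})+2N-1$, and the whole theorem reduces to controlling the admissible variety of cofactors together with the function $N(L)$ on it.

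First I would treat the maximal stratum $N=3$. Here $\mathcal{K}^L_{2,1}$ realizes the top bound $\Lambda_{2,1}$, so by the Remark following Theorem~\ref{first} the deformed prolongation connection $D^L_g$ on the rank-$3$ bundle $\mathcal{E}_1^{(1)}$ must be flat. For $d=1$ one verifies directly (this is the $d=1$ instance of the flatness-versus-$dL$ equivalence stated in that Remark, using the explicit $d=1$ formula with curvature term displayed after Lemma~\ref{L3}) that flatness forces $dL=0$; the gauge \eqref{gauge} then normalizes $L\sim0$, so the three relative Killing vectors are honestly conformal to Killing vectors. A three-dimensional space of Killing vectors on a surface is classical and occurs only for constant curvature. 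With the single cofactor $L\sim0$ and $N=3$ the admissible locus is a point and the fiber has dimension $2\cdot3-1=5$; removing the diagonal (real codimension $2$) from $\R\mathbb{P}^2\times\R\mathbb{P}^2$ leaves a connected set, which gives the connected five-dimensional family of the theorem.

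Next I would treat $N=2$. The point is that requiring a two-dimensional solution space of the relative Killing equation \eqref{Ksi} is itself a strong overdetermined condition on $L$: this is exactly the system $\hat{\mathcal{E}}_1$, and Theorem~\ref{thm2} (together with the explicit $d=1$ integrability relation $L_{[i;j]k}\doteq0$ derived in its proof) shows that, modulo gauge, $L$ obeys a finite-type system. In the surface case $dL$ has a single component, and I would check in isothermal coordinates that the relations \eqref{compL}--\eqref{eqL} together with $\dim\mathcal{K}^L_{2,1}=2$ pin $dL$ and its derivatives down algebraically, so the admissible locus of cofactors (modulo gauge) is zero-dimensional, i.e.\ a finite set; this is also where one must exclude a positive-dimensional family of cofactors with $N=2$, which would otherwise produce the spurious intermediate dimension $4$. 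Each of the finitely many admissible $L$ then contributes a component of dimension $2\cdot2-1=3$, and each such component, being modeled on $\R\mathbb{P}^1\times\R\mathbb{P}^1\setminus\op{diag}$ with the non-separating diagonal removed, is connected; hence the solution space has finitely many components, as claimed.

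The main obstacle is the sharp dichotomy itself, namely that $N\in\{0,1,2,3\}$ can contribute only the dimensions $0$, $3$, or $5$ and nothing in between. Concretely this means (i) proving the $d=1$ case of ``maximal dimension $\Leftrightarrow$ flat prolongation connection $\Leftrightarrow dL=0$'' and linking it to constant curvature, and (ii) proving rigidity of the cofactor when $N=2$, i.e.\ that no positive-dimensional family of admissible $L$ can coexist with a two-dimensional $\mathcal{K}^L_{2,1}$. Both steps require prolonging the relative Killing system in isothermal coordinates, computing the integrability conditions explicitly, and tracking the exact rank drops; it is this bookkeeping, rather than any single conceptual step, where I expect the real effort to lie.
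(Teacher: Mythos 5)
Your overall architecture --- stratify by $N(L)=\dim\mathcal{K}^L_{2,1}(g)$, identify $N=3$ with $dL\sim0$ and constant curvature (dimension $5$), and attribute the $3$-dimensional components to rigid cofactors with $N=2$ --- is the same dichotomy the paper establishes (the paper splits instead on $a_y=0$ versus $a_y\neq0$ for the gauge-normalized cofactor $L=e^{-2\lambda}a\,p$, which is equivalent), and your dimension counts $2N-1$ and the connectivity argument for the $5$-dimensional case are sound. But the decisive step is missing, and the tools you cite for it cannot supply it. To exclude the intermediate dimension $4$ you must show that the set of admissible cofactors with $N=2$, modulo the gauge \eqref{gauge}, is \emph{finite} (zero-dimensional). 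You appeal to Theorem \ref{thm2} and to the relation $L_{[i;j]k}\doteq0$ from its proof; these, however, assert only that $(K,L)$ satisfies a finite-type system, i.e.\ that the solution space is \emph{finite-dimensional}, parametrized by finitely many constants. Finite type is perfectly compatible with a one- or two-parameter family of admissible cofactors, which would produce a component of $\mathcal{R}_{2,1,1}(g)$ of dimension $4$ or $5$ for a non-spaceform metric --- exactly what the theorem forbids. The paper closes this hole by the computation your plan defers: when $a_y\neq0$, the compatibility condition \eqref{gap} turns \eqref{qqq} into the complete first-order system \eqref{qqq}+\eqref{ppp} on $(u,v)$; demanding two independent solutions forces its four compatibility conditions to hold identically in $u,v$, and these become a system on $w=a_y$ consisting of three second-order equations expressing all second derivatives of $w$ \emph{plus one additional first-order scalar constraint} $V_1w_x+V_2w_y+V_0=0$. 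It is this extra lower-order equation and its prolongations that rigidify the cofactor, so that each component of $\mathcal{R}_{2,1,1}(g)$ is a single $PGL_2$-orbit; nothing in the general finite-type statement of Theorem \ref{thm2} yields it.

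A secondary issue: your $N=3$ step invokes ``$D^L_g$ flat $\Leftrightarrow dL=0$'' from the Remark after Theorem \ref{first}, but the paper states this only as a conjecture, verified for $d=2$; for $d=1$ you must prove it, and the proof is again \eqref{gap}: if $a_y\neq0$ the system is complete of first order, hence $N\leq2$, so $N=3$ forces $a_y=0$ and then constancy of the curvature (the same analysis also disposes of the case $dL\sim0$ with nonconstant curvature, where $N\leq1$). Thus both of your ``deferred computations'' are precisely the content of the paper's proof; as written, the proposal is a correct road map with the substance left out. (A small further point: for $N=2$ the integrals attached to one cofactor form the set of pairs modulo a common scalar, a copy of $PGL_2(\R)$, which has two connected components rather than one; this does not affect the conclusion that there are finitely many components.)
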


 \begin{proof}
A relative Killing vector $u(x,y)\p_x+v(x,y)\p_y$ corresponds to the function $K=u(x, y)p + v(x, y)q$ on $T^*M$.
The cofactor via gauge $L\sim L+d\varphi$ can be chosen in the form $L=e^{-2\lambda(x,y)}a(x,y)p$.
Then the defining relation $\{H,K\}=LK$, split by variables $p,q$ is an overdetermined system
 \begin{equation}\label{qqq}
u_x= -(a+\lambda_x)u - \lambda_yv,\ v_x+u_y = -av,\ v_y = - \lambda_xu - \lambda_yv.
 \end{equation}
Its first compatibility condition may be calculated as the multi-bracket of \cite{KL2}, see also \cite{Kr1} for 
an example of such computation (equivalently: prolong to 3rd jets and find a syzygy). The result is 
 \begin{gather}\label{gap}
3(u_y-v_x)a_y+
\bigl(2\lambda_ya_y - 4(\lambda_{xx}+\lambda_{yy})\lambda_x + 2(\lambda_{xx}+\lambda_{yy})_x + 2a_{yy}\bigr)u \\
- \bigl((3a+2\lambda_x)a_y + 4(\lambda_{xx}+\lambda_{yy})\lambda_y  - 2(\lambda_{xx}+\lambda_{yy})_y +2a_{xy}\bigr)v=0.\notag
 \end{gather}
  
Thus if $a_y\neq0$ (equiv: $dL\neq0$) the above equation is nontrivial, and we get a complete system of the first order on $u,v$,
namely in addition to three equations of \eqref{qqq} we supply the following one
 \begin{equation}\label{ppp}
u_y = \frac1{3a_y}\Bigl( (e^{2\lambda}K_x-\lambda_ya_y-a_{yy})u + (e^{2\lambda}K_y+\lambda_xa_y+a_{xy})v) \Bigr),
 \end{equation}
where $K=-e^{-2\lambda}\Delta\lambda$ is the Gaussian curvature (half of the scalar curvature $R$).
 
In order for the fractional-linear integral to exist, system \eqref{qqq} should have at least two linearly independent solutions,
so system \eqref{qqq}+\eqref{ppp} must be compatible. 
The compatibility is the equality of mixed derivatives $u_{xy}=u_{yx},v_{xy}=v_{yx}$ modulo \eqref{qqq}+\eqref{ppp},
split by the coefficients of $u,v$, altogether four conditions. These have the form of the following system on $w=a_y\neq0$ 
(with this we completely remove the gauge freedom):
 \begin{gather*}
w_{xx}=U_{11}(x,y,w,w_x,w_y),\ \ w_{xy}=U_{12}(x,y,w,w_x,w_y),\ \ w_{yy}=U_{22}(x,y,w,w_x,w_y),\\
V_1(x,y,w)w_x+V_2(x,y,w)w_y+V_0(x,y,w)=0,
 \end{gather*}
where $U_i,V_j$ depend also on $\lambda$ and its derivatives in algebraic way
(the precise form of those functions is not important and hence not indicated). 
Thus the system is of finite type and completing it to involution we conclude that its solution space is algebraic.

The ratio of the obtained two relative Killing vectors is a fractional-linear integral $F=K_1/K_2$.
The group $PGL_2$ acts by reparametrization on the solution space 
 $$
\frac{K_1}{K_2}\mapsto\frac{a_{11}K_1+a_{12}K_2}{a_{21}K_1+a_{22}K_2}
 $$
and hence each component of $\mathcal{R}_{2,1,1}(g)$ is its orbit.

Next, if $a_y=0$ then the compatibility condition \eqref{gap} (vanishing of the coefficients of $u,v$) 
is equivalent to $R_x=0,R_y=0$, i.e.\ constancy of the scalar curvature $R$ of $g$. In this case system \eqref{qqq} is compatible, 
$L\sim0$ and we get 5-parametric family of solutions $u,v$. Namely each integral is the ratio of two Killing vectors:
$F=K_1/K_2$ (by simulteneous scaling of $K_1,K_2$ we can achieve $L=0$).

Finally, if $a_y=0$ but the scalar curvature $R$ is nonconstant, then we get an equation of order 0 in the system, its
prolongation allows to express all first jets of $u,v$, and hence the solution space $K_{2,1}^L(g)$ is at most 1-dimensional,
which does not allow to produce a fractional-linear integral. Hence such $L$ does not contribute to the space 
$\mathcal{R}_{2,1,1}(g)$ of rational integrals.
 \end{proof}

This confirms the conjecture, stated in the introduction, for the simplest case $m=2$, $r=s=1$.

%

\section{Examples}\label{S5}

{\bf 1.} Consider the metric $g=(x^2+4y^2)(dx^2+dy^2)$ on $\R^2(x,y)$. The corresponding Hamiltonian is 
 $$
H=\frac{p^2+q^2}{2(x^2+4y^2)}. 
 $$
There are no Killing vectors, but this metric is Liouville \cite{Kr1} and hence quadratic integrable.
In fact, it has 3 quadratic integrals, namely the energy and the following two:
 $$
F_1=\frac{(2yp-xq)(2yp+xq)}{x^2+4y^2},\quad
F_2=\frac{(2yp-xq)(x^2p+2y^2p-xyq)}{x^2+4y^2}.
 $$
In addition it has one cubic integral, which is automatically irreducible:
 $$
F_3=\frac{(2yp-xq)(2xp^2-2ypq+xq^2)}{x^2+4y^2}.
 $$
All quartic integrals are reducible, and apparently the same concerns the fifth degree integrals (verified only
for those divisible by $2yp-xq$). Of rational integrals we mention the following:
 $$
G_1=\frac{x^2p+2y^2p-xyq}{2yp+xq},\quad 
G_2=\frac{2xp^2-2ypq+xq^2}{x^2p+2y^2p-xyq}. 
 $$
 
 \begin{rk}
The above $H$ is equivalent to the Hamiltonian from Example 2 of \cite{AS} via the transformation
$(x,y)\mapsto(\arctan(y/x),\tfrac12\ln(x^2+y^2))$. The fractional-linear integral $F$ found there
corresponds to our $G_1$.
 \end{rk}

Let us now explain a relation of those integrals to relative Killing tensors. 
Consider the following functions
 $$
R_0=x^2p+2y^2p-xyq,\
R_1=2yp-xq,\
R_2=2yp+xq,\
R_3=2xp^2-2ypq+xq^2.
 $$
A direct verification shows that 
 $$
\{H,R_0\}=L_-R_0,\ \{H,R_1\}=L_+R_1,\ \{H,R_2\}=L_-R_2,\ \{H,R_3\}=L_-R_3,
 $$
where 
 $$
L_-=-\frac{6(xp+2yq)}{(x^2+4y^2)^2},\ L_+=\frac{2(xp-2yq)}{(x^2+4y^2)^2}. 
 $$
However since $\{H,-\tfrac14\ln(x^2+4y^2)\}=\tfrac{xp+4yq}{(x^2+4y^2)^2}$ these functions are cohomologous to 
 $$
L_-\sim -L,\ L_+\sim +L\ \text{ for }\ L=\frac{3xp}{(x^2+4y^2)^2}.
 $$
Now we can see that $R_0R_1$, $R_1R_2$, $R_1R_3$ with proper factors $f(x,y)$ give polynomial integrals, 
while $R_0/R_2$ and $R_3/R_2$ in a similar fashion give rise to rational integrals.
That is precisely how integrals $F_1,F_2,F_3$ and $G_1,G_2$ arise from those relative Killing vectors and tensors.

In particular, we notice that $G_1=F_2/F_1$, so the rational integral of Agapov and Shubin \cite{AS} is not a genuine
irreducible integral, but a ratio of Killing tensors.

\smallskip

{\bf 2.} Next consider the metric $g=(x^4+4y^4)(dx^2+dy^2)$, which corresponds to Example 3 from \cite{AS}.
Again, there are no Killing vectors, but the metric is Liouville. However now there are only two quadratic integrals

 $$
2H=\frac{p^2+q^2}{x^4+4y^4},\quad
F_2=\frac{(2y^2p-x^2q)(2y^2p+x^2q)}{x^4+4y^4}
 $$
and no cubic integrals in this case. 
A direct verification shows that the following is a basis of relative Killing vectors 
of the type $P(x,y)p+Q(x,y)q$ with $\op{deg}P,\op{deg}Q\leq3$:
 $$
R_1=2y^2p-x^2q,\ R_2=2y^2p+x^2q,\ R_3=(x^2+2y^2)yp+x^3q,\ R_4=(x^2-2y^2)p-2xyq;
 $$
The corresponding cofactors $L=\{H,\log R\}$ are:
 $$
L_1= \frac{-2(x-2y)(x^2p-2y^2q)}{(x^4+4y^4)^2},\ L_2= \frac{-2(x+2y)(x^2p+2y^2q)}{(x^4+4y^4)^2},\ 
L_3=L_4= -\frac{4x(x^2+y^2)p+2y(x^2+6y^2)q}{(x^4+4y^4)^2}.
 $$
One may note that $-L_1\sim L_2\not\sim L_3$.
We conclude that $F_2\sim R_1R_2$ and 
 $$
F_3=\frac{R_3}{R_4}= \frac{(x^2+2y^2)yp+x^3q}{(x^2-2y^2)p-2xyq}
 $$
is a fractional-linear integral (established in different coordinates in \cite{AS}). 
It is algebraically independent of the energy and $F_2$, however the Hamiltonian system
is super-integrable and there is still a possibility that $F_3$ may be a ratio of higher degree Killing tensors.
Thus this integral is irreducible in the sense of Kozlov but may still be reducible, like the integral from the previous example.

\smallskip

{\bf 3.} Let $J_0,J_1$ be the Bessel functions of the first kind. The following Hamiltonian 
 $$
H=e^{-2x}\frac{p^2+q^2}{J_0(y)^2+J_1(y)^2}. 
 $$
considered in \cite{AS} was found to possess a fractional-linear integral
 $$
F=\frac{(xp+yq)J_1(y) - (yp-xq)J_0(y)}{J_1(y)p + J_0(y)q},
 $$
and in \cite{Kr2} it was shown to be unique such modulo M\"obius trasformations. 

The system possesses neither linear nor quadratic integrals.
We did not exlore higher degree integrals, but if they are absent then $F$ will be an irreducible rational integral.
For this it would be sufficient to check whether the Hamiltonian system $H$ is non-resonant (that is,
almost every Liouville torus is the closure of every trajectory on it) which can be verified by analytic arguments. 

Note that the metrics $g$ from this and two previous examples 
give an instance of Theorem \ref{th2D}, for which the space of fractional-linear integrals
is 3-dimensional: the orbit of $PGL_2$-action on $G_1$ is $\frac{a_{11}F_1+a_{12}F_2}{a_{21}F_1+a_{22}F_2}$ for 
$a_{11}a_{22}-a_{12}a_{21}\neq0$. 

\smallskip

{\bf 4.} Finally let us mention the Hamiltonian from \cite{MP} and its higher-dimensional version \cite{AHT}
that we may further generalize to a system on $T^*\R^n$ as follows:
 $$
H=\tfrac12 g^{ij}p_ip_j+(b^ip_i)\cdot(m_1x^1p_1+\dots+m_nx^np_n),
 $$
where $(g^{ij})$ and $(b^i)$ are constant matrix and vector and $m_i$ are integers.

This system possesses rational integrals $p_i^{m_j}/p_j^{m_i}$, however such integrals are clearly reducible 
(they are ratios of Killing tensors) and hence do not add to integrability.

 \bigskip

{\bf Acknowledgment.}
This work was completed during several research trips of the author.
I am grateful for hospitality of IMPA at Rio de Janeiro and UNESP at campus S\~ao Jos\'e do Rio Preto in Brasil,
IHES in France, Isaak Newton Institute of Cambridge and the University of Loughborough in UK.

The research leading to these results was partially supported by the Tromsø Research Foundation 
(project “Pure Mathematics in Norway”) and the UiT Aurora project MASCOT.


\begin{thebibliography}{50}

\bibitem{AA}
S.\ Agafonov, T.\ Alves, {\it Fractional-linear integrals of geodesic flows on surfaces and Nakai's geodesic 4-webs},
Advances in Geometry {\bf 24}, no. 2, 263--273 (2024).

\bibitem{AS}
S.\ Agapov, V.\ Shubin, {\it Rational integrals of 2-dimensional geodesic flows: new examples}, 
J.\ Geom.\ Phys.\ {\bf 170}, 104389 (2021).

\bibitem{AHT}
A.\ Aoki, T.\ Houri, K.\ Tomoda, {\it Rational first integrals of geodesic equations and generalised hidden symmetries},
Class.\ Quantum Grav.\ {\bf 33}, 195003 (2016).

\bibitem{Coe}
S.\ Coen, {\it Factoriality of a ring of holomorphic functions}, Compositio Mathematica {\bf 29}, no. 2, 191-196 (1974).

\bibitem{Co}
C.\,D.\ Collinson, {\it A note on the integrability conditions for the existence of rational first integrals
of the geodesic equations in a Riemannian space}, Gen.\ Rel.\ Grav.\ {\bf 18}, 207 (1986).

\bibitem{Da}
G.\ Darboux, {\it M\'emoire sur les \'equations diff\'erentielles alg\'ebriques du
premier ordre et du premier degr\'e}, Bull.\ Sc.\ Math.\ 2\`eme s\'erie {\bf 2}, 60--96, 123--144, 151--200 (1878);
{\it Lecons sur la theorie generale des surfaces III}, Chelsea Publishing (1896).

\bibitem{D}
R.P.\ Delong, Jr., {\it Killing tensors and the Hamilton–Jacobi equation}, PhD thesis, University of Minnesota (1982).

\bibitem{E}
M.\ Eastwood, {\it Notes on projective differential geometry}, in: Symmetries and Overdetermined
Systems of Partial Differential Equations, IMA Volumes {\bf 144}, 41--60, Springer (2007).

\bibitem{EM}
M.\ Eastwood, V.\ Matveev, {\it Metric connections in projective differential geometry}, in: Symmetries and Overdetermined
Systems of Partial Differential Equations, IMA Volumes {\bf 144}, 339--350, Springer (2007).

\bibitem{GR}
R.\ Gunning, H.\ Rossi, {\it Analytic Functions of Several Complex Variables}, AMS Chelsea Publ.\ Series {\bf 368} (2009).


\bibitem{KoZ}
V.\ Kozlov, {\it Symmetries, Topology and Resonances in Hamiltonian Mechanics},
Ergeb.\ Math.\ Grenzgeb.\ (3) {\bf 31}, Springer, Berlin (1996).

\bibitem{Ko}
V.\ Kozlov, {\it On rational integrals of geodesic flows}, Regul.\ Chaotic Dyn.\ {\bf 19}, 601--606 (2014). 

\bibitem{Kr1}
B.\ Kruglikov, {\it Invariant characterization of Liouville metrics and polynomial integrals}, 
J.\ Geom.\ Phys.\ {\bf 58}, 979--995  (2008). 

\bibitem{Kr2}
B.\ Kruglikov, {\it On fractional-linear integrals of geodesics on surfaces},
Russ.\ J.\ Math.\ Phys.\ {\bf 32}, no.1, 97--104 (2025).

\bibitem{KL1}
B.\ Kruglikov, V.\ Lychagin, {\it Geometry of Differential equations},
Handbook on Global Analysis, D.Krupka and D.Saunders Eds., {\bf 1214}, 725-771, Elsevier Sci. (2008)

\bibitem{KL2}
B.\ Kruglikov, V.\ Lychagin, {\it Compatibility, multi-brackets and integrability of systems of PDEs},
Acta Appl.\ Math.\ {\bf 109}, 151--196 (2010).

\bibitem{KM}
B.\ Kruglikov, V.\ Matveev, {\it The geodesic flow of a generic metric does not admit nontrivial integrals polynomial
in momenta}, Nonlinearity {\bf 29}, 1755--1768 (2016).

\bibitem{MP}
A.\,J.\ Maciejewski, M.\ Przybylska, {\it Darboux polynomials and first integrals of natural polynomial
Hamiltonian systems}, Phys.\ Lett.\ A {\bf 326}, 219 (2004).

\bibitem{Te}
V.\ Ten, {\it Local integrals of geodesic flows}, Regul.\ Chaotic Dyn.\ {\bf 2}, 87--89 (1997). 

\bibitem{Th}
G.\ Thompson, {\it Killing tensors in spaces of constant curvature}, Jour.\ Math.\ Phys.\ {\bf 27}, 2693--2699 (1986).

\bibitem{Wh}
E.T.\ Whittaker, {\it A Treatise on the Analytical Dynamics of Particles and Rigid Bodies},
Cambridge University Press (1937).

\bibitem{Wo}
T.\ Wolf, {\it Structural equations for killing tensors of arbitrary rank}, 
Computer Physics Comm.\ {\bf 115}, no.\ 2--3, 316--329 (1998). 

 \end{thebibliography}
\end{document}